\newtheorem{thm}{Theorem}[section] 
\newtheorem{lemma}[thm]{Lemma}
\newtheorem{cor}[thm]{Corollary}
\newtheorem{coro}{Corollary}
\newtheorem{claim}{Claim}[thm]
\newtheorem{prop}[thm]{Proposition}
\newtheorem{fact}[thm]{Fact}
\newtheorem*{thma}{Main Theorem}
\theoremstyle{definition}
\newtheorem{definition}[thm]{Definition}
\newtheorem{conv}[thm]{Convention} 
\theoremstyle{remark}
\newcommand\br{\blacktriangleright}
\newcommand\s{\subseteq}
\newcommand\sq{\sqsubseteq}
\renewcommand{\restriction}{\mathbin\upharpoonright}    
\DeclareMathOperator{\ch}{CH}
\DeclareMathOperator{\cf}{cf}
\DeclareMathOperator{\otp}{otp}
\DeclareMathOperator{\dom}{dom}
\DeclareMathOperator{\rng}{Im}
\DeclareMathOperator{\zfc}{ZFC}
\DeclareMathOperator{\lex}{lex}
\DeclareMathOperator{\acc}{acc}
\DeclareMathOperator{\nacc}{nacc}
\DeclareMathOperator{\p}{P}
\DeclareMathOperator{\h}{ht}
\newcommand{\RNum}[1]{\uppercase\expandafter{\romannumeral #1\relax}}
\begin{document}

	\date{\today}
	\title[More minimal non-$\sigma$-scattered linear orders]{More minimal non-$\sigma$-scattered linear orders}
	
	\author{Roy Shalev}
	\address{Department of Mathematics, Bar-Ilan University, Ramat-Gan 52900, Israel.}
	\email{royshalev2@gmail.com}
	\urladdr{https://roy-shalev.github.io/}

	\subjclass[2010]{Primary 03E05. Secondary 03E35}

	\maketitle
	\begin{abstract}
	Assuming an instance of the Brodsky-Rinot proxy principle holding at a regular uncountable cardinal $\kappa$, 
we construct $2^\kappa$-many pairwise non-embeddable minimal non-$\sigma$-scattered linear orders of size $\kappa$.
In particular, in G\"odel's constructible universe $L$, these linear orders exist for any regular uncountable cardinal $\kappa$ that is not weakly compact.
This extends a recent result of Cummings, Eisworth and Moore 
that takes care of all the successor cardinals of $L$.

At the level of $\aleph_1$,
their work answered an old question of Baumgartner by constructing from $\diamondsuit$ a minimal Aronszajn line that is not Souslin.
Our use of the proxy principle yields the same conclusion from a weaker assumption which holds 
for instance in the generic extension after adding a single Cohen real to a model of $\ch$.			
	\end{abstract}	
	\section{Introduction}

Fra{\"i}ss\'e conjectured in \cite{MR0028912} that the class of countable linear orders are \emph{well quasi-ordered} by embeddability, meaning that given an infinite sequence $\langle {(L_i,<_{L_i})}\mid i<\omega \rangle$ of elements in the class, there exist $i<j<\omega$ such that $(L_i,<_{L_i})$ is embeddable into $(L_j,<_{L_j})$.
In \cite{FraisseConj}, Laver proved the stronger result, that the larger class of $\sigma$-scattered linear orders is well quasi-ordered by embeddability.
Recall that a linear order $(L,{<_L})$ is \emph{scattered} if the rational line $(\mathbb Q,{<})$ does not embed into it; $(L,{<_L})$ is \emph{$\sigma$-scattered} if it is a union of countably many scattered suborders.
In his paper, Laver asks what is the behavior of the class $\mathfrak M$ of non-$\sigma$-scattered linear orders under embeddability.
We will focus on two specific characteristics:
\begin{itemize} \item The existence of a minimal element in the class; to be precise, we say that $(L,{<_L})\in\mathfrak M$ is \emph{minimal} if for every $X\subseteq L$ such that $(X,{<_L})\in \mathfrak M$, there exists an embedding from $L$ to $X$, i.e., there exists an order preserving function $f:L\rightarrow X$.
	\item The maximal cardinality of an anti-chain in the class, i.e. a family of elements which are incomparable under the embedability order.
\end{itemize}

There are two well-known different types of non-$\sigma$-scattered linear orders: \emph{real types} which are uncountable linear orders isomorphic to a subset of the real line; and  \emph{Aronszajn lines} which are linear orders of size $\aleph_1$ with the property that they do not contain uncountable suborders which are either separable or scattered.
To verify that the two types are indeed not $\sigma$-scattered one can use the following theorem of Hausdorff \cite{MR1511478}; If $\kappa$ is a regular cardinal and $(L,<_L)$ is a scattered linear order of cardinality $\kappa$, then either $(\kappa,\in)$ or the reversed order $(\kappa,\in^*)$ embeds into $(L,<_L)$.
A literature review about real types may be found in \cite{MR0661296} and \cite{CEM23}; here, we will focus on Aronszajn lines.

An important sub-class of the Aronszajn lines are the \emph{Countryman lines}; A Countryman line is an uncountable linear order $(C,<_C)$ such that its square with the coordinate-wise ordering is the union of countably many chains.
In \cite{Sh:50}, Shelah constructed the first Countryman line in $\zfc$.
Later in \cite{MR908147} Todor\v{c}evi\'{c} obtained more constructions using the method of \emph{walks on ordinals}.
It is a well-known result (see \cite[Theorem~2.1.12]{Walks_on_ordinals_book} and the discussion before Theorem~1.6 in \cite{Moore_may_be_the_only_minimal_uncounable_lin_orders}) that using a technique similar to \cite{Moore_Universal_Aronszajn_line} one can show that under \emph{Martin's Axiom} every $\aleph_1$-dense non-stationary Countryman line is minimal with respect to the class of uncountable linear orders.
Recall that a Souslin line is a non-separable linear order which has the countable chain condition under the order topology.
It is folklore fact that Souslin lines are not Countryman lines.

It was claimed by Baumgartner \cite{MR0661296} that assuming $\diamondsuit^+$,
 there exists a Souslin line which is minimal with respect to the class of uncountable linear orders (an error in Baumgartner's proof was found and fixed by D. Soukup \cite[Section~4]{Soukup_strongly_surjective}).
Baumgartner asked whether in fact $\diamondsuit$ suffices to construct a minimal Aronszajn line which was not Souslin.

In a recent paper by Cummings, Eisworth and Moore \cite{CEM23} a positive answer was given, by proving that $\diamondsuit$ implies there exists a Countryman line which is minimal with respect to the class of uncountable linear orders. 
The authors also gave the first example of a minimal non-$\sigma$-scattered linear order of size greater than $\aleph_1$.
In fact, in G\"odel's constructible universe for every infinite cardinal $\lambda$,
they constructed a minimal element of the class $\mathfrak M_{\lambda^+}$ of non-$\sigma$-scattered linear orders of size $\lambda^+$.
The construction goes through a reduction of the problem to the existence of a $\lambda^+$-Aronszajn tree with particular properties,
and then such a tree is constructed using the square with built-in diamond principle.

Next we present the contribution of this paper to the subject.
At the end of the paper \cite{CEM23}, the authors write that presumably, in the constructible universe a similar construction can be done for any regular uncountable cardinal $\kappa$ that is not weakly compact.
In this paper, the details of such a construction are given. First, the problem of the existence of a minimal element of $\mathfrak M_\kappa$ 
is again reduced to the existence of a certain $\kappa$-Aronszajn tree; this step is almost identical to that of the successor case, and we shall point out the adjustments made.
Second, instead of square with built-in diamond, the construction here will be using the Brodsky-Rinot proxy principle $\p(\ldots)$ from \cite{paper22},
as the proxy principle was designed to enable a uniform construction that simultaneously applies to all type of regular cardinals (be it $\aleph_1$, a successor of a regular, a successor of a singular or an inaccessible that is not weakly compact).
Proxy-based constructions of trees usually follow the so-called \emph{microscopic approach} as a procedure involving a transfinite application of local \emph{actions} to control global features of the
outcome tree. 
In our construction, in addition to employing actions needed to ensure that the lexicographic ordering of subtrees of the outcome tree be minimal non-$\sigma$-scattered linear orders,
we shall adapt an action from \cite{paper62} for producing many distinct trees in a way that will produce many distinct minimal orders.

\begin{thma} Assume $\kappa$ is a regular uncountable cardinal and $\p_\xi(\kappa,2,{\sq},\allowbreak\kappa)$ holds for some ordinal $\xi\le\kappa$.
	Then the class $\mathfrak M_\kappa$ of non-$\sigma$-scattered linear orders of size $\kappa$
	has $2^\kappa$-many pairwise non-embeddable minimal elements with respect to being non-$\sigma$-scattered. If $\xi<\kappa$, then the elements are all $\kappa$-Countryman lines.
\end{thma}

The definition of $\p_\xi(\kappa,2,{\sq},\kappa)$ together with a list of sufficient conditions for it to hold
may be found at the beginning of Section~\ref{sectree}.
Thus we get the following two corollaries, the first answers the question raised in \cite{CEM23} and the second regards the Baumgartner question.

\begin{coro}
	Assume $V=L$.
	For every uncountable regular non weakly compact cardinal $\kappa$, the class $\mathfrak M_\kappa$ of non-$\sigma$-scattered linear orders of size $\kappa$
	has $2^\kappa$-many pairwise non-embeddable minimal elements with respect to being non-$\sigma$-scattered. 
\end{coro}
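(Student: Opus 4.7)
The plan is to deduce the corollary directly from the Main Theorem by verifying, in $L$, the hypothesis $\p_\xi(\kappa,2,{\sq},\kappa)$ for some ordinal $\xi\le\kappa$, for every uncountable regular $\kappa$ that is not weakly compact. I would first consult the list of sufficient conditions for the proxy principle given at the beginning of Section~\ref{sectree}; these conditions typically express $\p_\xi(\kappa,2,{\sq},\kappa)$ as a consequence of a $\diamondsuit$-like guessing principle on $\kappa$ combined with a coherent $\sq$-sequence of clubs whose order types are bounded by $\xi$.

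Next, I would split into cases according to the nature of $\kappa$. When $\kappa=\lambda^+$ is a successor, the conjunction of $\diamondsuit(\kappa)$ and $\square_\lambda$---both provable in $L$ by Jensen's theorems---supplies the required ingredients with $\xi=\cf(\lambda)<\kappa$; in particular the Main Theorem will further guarantee that the resulting linear orders are $\kappa$-Countryman. When $\kappa$ is inaccessible and not weakly compact, I would combine $\diamondsuit(\kappa)$ with Jensen's global $\square$ and the existence of a non-reflecting stationary subset of $\kappa$ (the latter following from $V=L$ together with the failure of weak compactness) to obtain $\p_\xi(\kappa,2,{\sq},\kappa)$, now with $\xi\le\kappa$ possibly equal to $\kappa$.

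In all cases, the Main Theorem then produces the desired $2^\kappa$-many pairwise non-embeddable minimal elements of $\mathfrak M_\kappa$.

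The main obstacle is essentially bookkeeping: aligning the specific proxy-principle parameters demanded by the Main Theorem with the fine-structural principles available in $L$, and choosing a value of $\xi$ compatible with the $\sq$-coherent witnesses that $\square$ produces. The genuinely combinatorial work is already encapsulated either in classical theorems of Jensen or in the catalogue of implications established in \cite{paper22}, so no new set-theoretic argument is needed; the corollary is effectively a straightforward application once Section~\ref{sectree} is in place.
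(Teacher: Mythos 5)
Your proposal is correct and takes essentially the same route as the paper: the corollary is obtained by feeding the Main Theorem (i.e.\ Theorem~\ref{Proposition - many linear orders} together with Proposition~\ref{Proposition - the reduction}) an instance of the proxy principle that holds in $L$. The only differences are that the paper needs no case split, since the second clause of Fact~\ref{11022} (quoting \cite{paper22} and \cite{paper29}) already gives $\p_\kappa(\kappa,2,{\sq},\kappa)$ uniformly for every regular uncountable non-weakly-compact $\kappa$ in $L$, and that in your successor case the parameter should be $\xi=\lambda$ rather than $\cf(\lambda)$ (the clubs produced by $\square_\lambda$ have order types up to $\lambda$), a slip that does not affect the conclusion.
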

\begin{coro}
	Assuming $\p^\bullet_\omega(\omega_1,2,{\sq},\omega_1)$ --- a consequence of $\diamondsuit$ which also holds in the generic extension after adding a single Cohen real to a model of $\ch$ ---
	there exists a family of $2^{\aleph_1}$ many Countryman lines each one is minimal with respect to being non-$\sigma$-scattered and every two members of the family are not embedded into each other.
\end{coro}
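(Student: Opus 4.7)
The plan is to derive this corollary as a direct specialisation of the Main Theorem to $\kappa=\omega_1$ and $\xi=\omega$. Since $\xi<\kappa$, the final clause of the Main Theorem activates: once its hypothesis is met, one obtains a family of $2^{\aleph_1}$-many pairwise non-embeddable minimal non-$\sigma$-scattered linear orders of size $\aleph_1$, each of which is an $\omega_1$-Countryman line --- that is, a Countryman line in the usual sense, as desired. So the entire corollary reduces to a single deduction and two auxiliary verifications.

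First I would check that $\p^\bullet_\omega(\omega_1,2,{\sq},\omega_1)$ implies $\p_\omega(\omega_1,2,{\sq},\omega_1)$, which is the hypothesis actually needed by the Main Theorem. This should follow immediately from the definitions given at the beginning of Section~\ref{sectree}, since in the Brodsky-Rinot framework the $\bullet$-decoration is a strengthening of the plain proxy principle by a built-in diamond-style guessing feature; dropping the decoration is a weakening.

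Second I would justify the parenthetical assertions about when $\p^\bullet_\omega(\omega_1,2,{\sq},\omega_1)$ itself holds. The implication $\diamondsuit \Rightarrow \p^\bullet_\omega(\omega_1,2,{\sq},\omega_1)$ belongs to the standard catalogue of sufficient conditions assembled in \cite{paper22}: at the level of $\omega_1$ the $\sq$-coherence constraint is trivial, the parameter $\omega$ reflects the structure of cofinalities below $\omega_1$, and $\diamondsuit$ simultaneously supplies the underlying coherent sequence and the guessing feature encoded by the bullet. The Cohen-real statement is more delicate; here I would cite a preservation result from the same line of work showing that, starting from a model of $\ch$, a single Cohen real generically meets the guessing requirements of $\p^\bullet_\omega(\omega_1,2,{\sq},\omega_1)$, while the ground-model $\ch$ furnishes the enumeration of $H_{\omega_1}$ needed to assemble the sequence itself.

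The main obstacle in the paper lies in the Main Theorem and not in this corollary; once that theorem and the standard sufficient-condition library for $\p^\bullet_\omega(\omega_1,2,{\sq},\omega_1)$ are available, the corollary is essentially a translation of parameters, and the only point requiring genuine care is the Cohen-real preservation, which is already on record in the Brodsky-Rinot literature cited in Section~\ref{sectree}.
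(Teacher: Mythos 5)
There is a genuine gap, and it sits exactly at the step you call a "single deduction": you assert that $\p^\bullet_\omega(\omega_1,2,{\sq},\omega_1)$ \emph{implies} $\p_\omega(\omega_1,2,{\sq},\omega_1)$, on the grounds that the $\bullet$-decoration is a strengthening. The direction is backwards. In the Brodsky--Rinot framework, $\p_\xi(\kappa,2,{\sq},\kappa)$ is by convention the conjunction of $\p^-_\xi(\kappa,2,{\sq},\kappa)$ with $\diamondsuit(\kappa)$, whereas $\p^\bullet_\xi(\kappa,2,{\sq},\kappa)$ is the \emph{weaker} principle of \cite{paper23} in which the full external $\diamondsuit(\kappa)$ is replaced by a guessing feature folded into the $C$-sequence; the paper says so explicitly in the closing paragraph of Section~5. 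Indeed, if your implication held, then the Cohen-real clause of the corollary would yield $\diamondsuit(\omega_1)$ after adding a single Cohen real to a model of $\ch$, and the corollary would carry no content beyond the $\diamondsuit$ case --- precisely the opposite of what is being advertised. Consequently the corollary is \emph{not} a literal specialisation of the Main Theorem, whose hypothesis is the stronger $\p_\xi(\kappa,2,{\sq},\kappa)$.

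The paper's actual route is different in substance even though it is stated in one sentence: one must revisit the constructions of Theorem~\ref{Proposition - the construction} and Theorem~\ref{Proposition - many linear orders} and check that the only use made of $\diamondsuit(\kappa)$ (via the $\diamondsuit(H_\kappa)$ reformulation and the sets $A_i$, $B_i$ in the sealing claims) is of the weaker $\p^\bullet$-strength, so that the entire argument runs under $\p^\bullet_\xi(\kappa,2,{\sq},\kappa)$; the conclusion for a single Cohen real over $\ch$ is then read off from \cite[Theorem~6.1(11)]{paper23}. Your two auxiliary verifications are fine in spirit ($\diamondsuit$ gives $\p_\omega(\omega_1,2,{\sq},\omega_1)$ by Fact~\ref{11022}, hence the weaker bullet version, and the Cohen-real statement is indeed on record in \cite{paper23}), but the missing idea is the re-examination of the construction under the weakened hypothesis; without it, or with the implication run in the wrong direction, the deduction does not go through.
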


\subsection{Organization of this paper}
In Section $2$, we provide a few preliminaries on trees.

In Section $3$, we present the reduction of the problem of the existence of a minimal linear order with respect to being non-$\sigma$-scattered to the existence of a $\kappa$-Aronszajn tree with specific features.

In Section $4$, we give a construction of the tree with the sought-after features.

In Section $5$, we show how to construct $2^\kappa$-many linear orders each of size $\kappa$ and minimal with respect to being non-$\sigma$-scattered such that any two of them are not embeddable to one another.

\section{Preliminaries}
Throughout this paper, $\kappa$ denotes a regular uncountable cardinal. 

$H_\kappa$ denotes the collection of all sets of hereditary cardinality less than $\kappa$.
For a set of ordinals, $C$,
write $\acc(C) := \{\alpha\in C\mid \sup (C\cap\alpha) = \alpha>0 \}$ and $\nacc(C) := C \setminus \acc(C)$.
Given an ordinal $\alpha$, we denote by $\alpha^*$ the reversed linear order of $(\alpha,\in)$.
Given an ordinal $\alpha$ and a set $A$, let ${}^{<\alpha}A$ be the set of all functions with domain smaller than $\alpha$ and image subset of $A$, simlarly we define ${}^{\alpha}A$ as the set of all function with domain $\alpha$ and image subset of $A$.
For pair of regular cardinals $\theta<\kappa$, denote $E^\kappa_\theta:=\{\alpha<\kappa\mid \cf(\alpha)=\theta\}$.
For all $\alpha$ ordinal, $t:\alpha\rightarrow \omega$, and $i<\omega$, we denote by $t{}^\frown \langle i\rangle$ the unique function $t'$ extending $t$ and satisfying $\dom(t')=\alpha+1$ and $t'(\alpha)=i$.

\subsection{Extended background on Aronszajn lines}
Assuming the \emph{Proper Forcing Axiom}, the behavior of the class of Aronszajn lines is well classified by the the following theorems:
In \cite{Moore_5_basis}, Moore showed that every Aronszajn line has a Countryman suborder and in \cite{Moore_Universal_Aronszajn_line} he proved there exists a universal Aronszajn line, building on that, Martinez Ranero \cite{MR2822417} showed that the Aronszajn lines are well quasi-ordered by embeddability.

Regarding the question of minimal non-$\sigma$-scattered linear orders which are neither a real type nor Aronszajn line, Lamei Ramandi showed two constructions of a positive answer consistent with $ \diamondsuit$, the first \cite{MR4045989} a dense suborder of a Kurepa line of cardinality $\aleph_1$ and the second \cite{MR4383009} with the property that every
uncountable suborder contains a copy of $\omega_1$. 

Next, we move on to survey negative solutions to the minimality question.
In \cite{Moore_may_be_the_only_minimal_uncounable_lin_orders}, Moore showed it is consistent (with CH) that $\omega_1$ and $\omega_1^*$ are the only minimal uncountable linear orders.
D.~Soukup adapted this argument and showed in \cite{MR4013972} that the existence of a Souslin line does not imply the existence of a minimal Aronszajn line.

The strategy in \cite{Moore_may_be_the_only_minimal_uncounable_lin_orders}
was combined with the analysis of \cite{MR2534177} to yield the following result by Lamei Ramandi and Moore \cite{MR3934853}: If there is a supercompact cardinal,
then there is a forcing extension in which $\ch$ holds and
there are no minimal non-$\sigma$-scattered linear orders.

\subsection{Trees and linear orders}
Recall that a poset $\mathbf T=(T,<_T)$ is a $\kappa$-tree iff all of the following hold:
\begin{enumerate}
	\item For every $x\in T$, the set $x_{\downarrow}:=\{y\in T\mid y<_T x\}$ is well-ordered by $<_T$. 
	Hereafter, write $\h(x):=\otp(x_\downarrow ,<_T)$;
	\item For every $\alpha<\kappa$, the set $T_\alpha:=\{x\in T\mid \h(x)=\alpha\}$ is nonempty and has size less than $\kappa$, and the set $T_\kappa$ is empty.
\end{enumerate}
A subset $B\subseteq T$ is a branch iff $(B,<_T)$ is linearly ordered.
The height of a subset $A\subseteq T$, denoted by $\h(A)$ is the least $\alpha$ ordinal such that $T_\alpha\cap A=\emptyset$.

For a subset $X\subseteq T$, let $X_\downarrow:=\{z\in T\mid \exists x\in X[z\leq_T x]\}$.
A subset $A\subseteq T$ is a \emph{subtree} of $T$ if and $A_\downarrow =A$ and $\h(A)=\h(T)$.
Given $x\in T$, let $x^\uparrow:=\{t\in T\mid x\leq_T t\}$.
For a subset $A$ of ordinals, let $T\restriction A:=\{t\in T\mid \h(t)\in A\}$.

A \emph{$\kappa$-Aronszajn tree} is a $\kappa$-tree with no $\kappa$-sized branches, if it furthermore has no $\kappa$-sized antichains it is called a \emph{$\kappa$-Souslin tree}.
A tree $\mathbf T$ is said to be \emph{Hausdorff} iff for every limit ordinal $\alpha$ and all $x,y\in T_\alpha$, if $x_\downarrow =y_\downarrow$, then $x=y$.
A $\kappa$-tree is said to be \emph{slim} if for every infinite $\alpha<\kappa$ we have $|T_\alpha|\leq |\alpha|$.

A \emph{streamlined $\kappa$-tree} \cite{paper23} is a subset $T\subseteq {}^{<\kappa}H_\kappa$ such that the following two conditions are satisfied:
\begin{enumerate}
	\item $T$ is a downward-closed, i.e. for every $t\in T$, $\{t\restriction \alpha\mid \alpha<\kappa\}\subseteq T$;
	\item for every $\alpha<\kappa$, the set $T_\alpha:=T\cap {}^{\alpha}H_\kappa$ is nonempty and has size $<\kappa$.
\end{enumerate}

We identify a streamlined $\kappa$-tree $T$ with the poset $\mathbf T=(T,{\s})$.
Note that streamlined trees are Hausdorff.

All trees discussed in this paper will be Hausdorff and all trees constructed will be streamlined.

The \emph{lexicographic ordering} of a tree $T\subseteq {}^{<\kappa}\omega$ is a linear order $(T,{<_{\lex}})$ defined as follows:  For $s,t\in T$, we let $s<_{\lex} t$ if and only if $ s\subseteq t\text{ or } s(\Delta)<t(\Delta)$  where $\Delta:=\min\{\xi<\min\{\dom(s),\dom(t)\}\mid s(\xi)\neq t(\xi)\}$.
 	
A linear order $(L,<_L)$ is a \emph{$\kappa$-Aronszajn line} if both $\kappa$ and $\kappa^*$ are not embedded into the line and every subset of size $\kappa$ of $L$ has a density $\kappa$.

By a theorem of Hausdorff \cite{MR1511478} (see also \cite{CEM23} and \cite{MR0662564}), $\kappa$-Aronszajn lines are non-$\sigma$-scattered.
One can check that a generalization of a folklore argument (\cite[Theorem~5.1]{MR0776625}) proves that if $(T,\subseteq)$ is a $\kappa$-Aronszajn tree, then $(T,{<_{\lex}})$ is a $\kappa$-Aronszajn line.
 
We will construct a streamlined $\kappa$-Aronszajn tree $T\subseteq {}^{<\kappa} \omega$  which is not $\kappa$-Souslin such that for every antichain $X\subseteq T$ of size $\kappa$ the corresponding line $(X,{<_{\lex}})$ will be a minimal non-$\sigma$ scattered linear order, i.e. for $Y\subseteq X$ such that $(X,{<_{\lex}})$ is non-$\sigma$ scattered, then $(X,{<_{\lex}})$ embeds into $(Y,{<_{\lex}})$.

We must ensure that every subset of $T$ of size less than $\kappa$ ordered lexicographically will be $\sigma$-scattered.
To achieve this, the authors in \cite{CEM23} constructed the tree such that it is closed under the following kind of modifications.

\subsection{$\varrho$-modifications}
In this subsection, we collect some key concepts from \cite{CEM23}.
A \emph{$\varrho$-modification} is function $\eta:x\rightarrow \mathbb Z$ where $x$ is a finite subset of $\nacc(\kappa)$ and $0\in x$.
For a $\varrho$-modification $\eta$, we let $\eta^-:\dom(\eta)\rightarrow\mathbb Z$ denote the $\varrho$-modification satisfying $\eta^-(\beta)=-\eta(\beta)$ for each $\beta\in\dom(\eta)$.
We denote by $\varrho$ the set of all $\varrho$-modifications.
\begin{definition}
For two elements $\eta,\tau$ of $H_\kappa$, we define $\eta*\tau$ to be the emptyset, unless all of the following hold:

\begin{itemize}
\item $\eta\in\varrho$,
\item	$\tau\in{}^{<\kappa}\omega$,  
\item the map $f:\dom(\tau)\rightarrow \mathbb Z$ defined by stipulating 	
$$f(\beta):=\eta(\beta^-)+\tau(\beta)$$
where $\beta^-:=\max(\dom(\eta)\cap (\beta+1)))$, is in fact a function to $\omega$,
\end{itemize}
in which case, we let $\eta*\tau$ be the above function $f$.
\end{definition}
Note that $\eta:\{0\}\rightarrow\{0\}$ is an element of $\varrho$ satisfying $\eta*\tau=\tau$ for every $\tau\in{}^{<\kappa}\omega$.
Given two non-empty function $s,t$, we say that $s$ is a $\varrho$-modifcation of $t$ if there exists some $\eta\in \varrho$ such that $s=\eta*t$.

\begin{definition}
A streamlined $\kappa$-tree $T\s{}^{<\kappa}\omega$ is said to be:
\begin{itemize}
\item \emph{$\varrho$-coherent} if for all $\alpha<\kappa$ and $t,s\in T_\alpha$, there exists some $\eta\in \varrho$ such that $\eta*t=s$;
\item \emph{$\varrho$-uniform} if for every $t\in T$, it is the case that $\eta*t$ is in $T$
for every $\eta\in\varrho$;
\item \emph{uniformly $\varrho$-coherent} if it is $\varrho$-coherent and $\varrho$-uniform.
\end{itemize}
\end{definition}

\begin{definition}
	Suppose that $T \subseteq {}^{<\kappa}\omega$ is uniformly $\varrho$-coherent, $i<\omega$ and $s$ and $t$ are in $T$.
	$t$ is an  {\em $i$-extension} of $s$, written $s\subseteq_i t$,  if  $s \subseteq t$ and $t(\xi)\geq i$ whenever
	$\dom(s) \leq \xi < \dom(t)$.
\end{definition}

\begin{definition}[{\cite[Definition~3.8]{CEM23}}]
	Suppose that $T \subseteq {}^{<\kappa}\omega$ is uniformly $\varrho$-coherent.
	\begin{enumerate}
		\item 
		The {\em cone of $T$ determined by $s$}, denoted $T_{[s]}$, is defined as usual by
		\[
		T_{[s]}:=\{t\in T\mid t\subseteq s\text{ or }s\subseteq t\}.
		\]
		\item 
		The {\em frozen cone of $T$ determined by $s$ and $i$}, denoted $T_{[s,i]}$, is defined by
		\[
		T_{[s, i]}:=\{t\in T\mid t\subseteq s\text{ or }s \subseteq_i t\}.
		\]
	
	\end{enumerate}
\end{definition}

Note that $T_{[s]}=T_{[s,0]}$ and $\langle T_{[s,i]}\mid i<\omega\rangle$ is a $\subseteq$-decreasing with intersection $\{t\in T\mid t\subseteq s\}$.
Assuming $T\subseteq {}^{<\kappa}\omega$ is uniformly $\varrho$-coherent, frozen cones of $T$ of the form $T_{[s,i]} $ where $\dom(s)\in \nacc(\kappa)$ are also subtrees of $T$, i.e. they are of size $\kappa$ and downward closed.

\begin{lemma}\label{T_s embedding to T_{[s,i]}}
	Assume $T\subseteq {}^{<\kappa}\omega$ is a $\kappa$-Aronszajn tree which is uniformly $\varrho$-coherent.
	For all $s\in T\restriction \nacc(\kappa)$ and $i<\omega$, there exists a map $\varphi:T_{[s]}\rightarrow T_{[s,i]}$ which is $<_{\lex}$ preserving and preserving incompatibility in the tree.
\end{lemma}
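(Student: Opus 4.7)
The plan is to define $\varphi$ uniformly by a single $\varrho$-modification. Set $\eta$ to be the function with domain $\{0,\dom(s)\}$ (or just $\{0\}$ if $\dom(s)=0$) given by $\eta(0)=0$ and $\eta(\dom(s))=i$. Since $\dom(s)\in\nacc(\kappa)$, the function $\eta$ is a legal $\varrho$-modification. Define $\varphi:T_{[s]}\to T$ by $\varphi(t):=\eta*t$. By the $\varrho$-uniformity of $T$, the image lies in $T$.

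I would next check that $\varphi$ lands in $T_{[s,i]}$. If $t\subseteq s$, then for every $\beta<\dom(t)\le \dom(s)$ the ordinal $\beta^-:=\max(\dom(\eta)\cap(\beta+1))$ equals $0$, so $\varphi(t)=t\subseteq s$. If $s\subseteq t$, then for $\beta<\dom(s)$ again $\beta^-=0$, giving $\varphi(t)(\beta)=t(\beta)=s(\beta)$, while for $\dom(s)\le\beta<\dom(t)$ we have $\beta^-=\dom(s)$, giving $\varphi(t)(\beta)=i+t(\beta)\ge i$. Hence $s\subseteq_i\varphi(t)$, as required.

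For the order properties, the key observation is that on each of the two pieces $\{t\in T_{[s]}:t\subseteq s\}$ and $\{t\in T_{[s]}:s\subseteq t\}$ the map $\varphi$ is, respectively, the identity and the pointwise translation $t(\beta)\mapsto i+t(\beta)$ above level $\dom(s)$. Given $t_1<_{\lex}t_2$ in $T_{[s]}$, if one of them is contained in $s$ and the other extends $s$ properly, the shorter must be an initial segment of the longer (since $T_{[s]}$ is a cone), so $\varphi(t_1)\subseteq\varphi(t_2)$ by direct computation. Otherwise both lie in the same piece, and the first level of disagreement $\Delta$ between $\varphi(t_1)$ and $\varphi(t_2)$ coincides with that of $t_1,t_2$, with the $\Delta$-values shifted by the same constant (either $0$ or $i$), so the strict lex inequality is preserved. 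Incompatibility is preserved because $\eta$ has an inverse modification $\eta^-$ satisfying $\eta^-*(\eta*t)=t$; thus any containment $\varphi(t_1)\subseteq \varphi(t_2)$ pulls back to $t_1\subseteq t_2$.

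The argument is essentially a verification once the right $\eta$ is chosen, so there is no real obstacle; the only point deserving care is the edge case $\dom(s)=0$, where one must let $\eta=\{(0,i)\}$ and observe that then $T_{[s]}=T$ and $T_{[s,i]}=\{t\in T:t(\beta)\ge i \text{ for all } \beta<\dom(t)\}$, so the same formula $\varphi(t)=\eta*t$ still works.
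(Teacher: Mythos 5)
Your proof is correct and is essentially the paper's own argument: the paper defines exactly the same $\varrho$-modification $\eta$ with $\eta(0)=0$, $\eta(\dom(s))=i$, sets $\varphi(t):=\eta*t$, and invokes $\varrho$-uniformity; it merely leaves the order-preservation and incompatibility checks (which you spell out, along with the harmless $\dom(s)=0$ edge case) as ``clear.''
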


\begin{proof}
	Let $\eta:\{0,\dom(s)\}\rightarrow \mathbb Z$ be the map defined by $\eta(0)=0$ and $\eta(\dom(s))=i$, note that $\eta\in \varrho$.
	We define $\varphi:T_{[s]}\rightarrow T_{[s,i]}$ by letting $\varphi(t):=\eta*t$.

	As $T$ is $\varrho$-uniform we get that $\varphi(t)\in T$ whenever $t\in T$.
	
	Fix $t\in T$ such that $s\subseteq t$, we claim that $\varphi(t)\supseteq_i s$.
	Note that $s\subseteq \varphi(t)$ and the for every $\dom(s)\leq \xi<\dom(t)$, $\varphi(t)(\xi)\geq i$ as sought, hence $\varphi(t)\in T_{[s,i]}$.
	
	It is clear that $\varphi$ is $<_{\lex}$ preserving and preserving incompatibility in $T$.	
\end{proof}

\begin{lemma}\label{Lemma - req. for Laver}
	Suppose that $T\subseteq {}^{<\kappa}\omega$ is a $\varrho$-uniform $\kappa$-Aronszajn tree and $s\in T$.
	Let $\lambda:=|T_{\leq \dom(s)}| $ and $\delta:=\dom(s)+\lambda +1$. Then every interval of $(T_{[s]}\cap T_\delta,{<_{\lex}})$ contains a copy of $\zeta^*$ and $\zeta$, for every $\zeta<\lambda^+$.
\end{lemma}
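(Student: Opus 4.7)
The plan is to fix an open interval $(a,b)$ in $(T_{[s]} \cap T_\delta, <_{\lex})$ together with an ordinal $\zeta < \lambda^+$, and to use the $\varrho$-uniformity of $T$ to construct copies of both $\zeta$ and $\zeta^*$ inside $(a,b)$. Write $\Delta$ for the branching level of $a$ and $b$, so $\Delta \geq \dom(s)$ since both extend $s$, and set $J := \nacc(\kappa) \cap (\Delta, \delta)$. In the main case, the generous choice $\delta = \dom(s) + \lambda + 1$ ensures $|J| = \lambda$; enumerate $J$ increasingly as $\langle \alpha_\mu : \mu < \lambda \rangle$.

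The engine of the construction is a family of long monotone chains inside $(a,b)$ produced by specific $\varrho$-modifications of $a$. Fixing $N \geq 1$, for each $\mu < \lambda$ let $\eta^+_\mu \in \varrho$ have domain $\{0, \alpha_0, \alpha_{\mu+1}\}$ with values $0, N, N-1$, and let $\eta^-_\mu$ have the same domain with values $0, N, N+1$. By $\varrho$-uniformity, $c^\pm_\mu := \eta^\pm_\mu * a$ belong to $T_{[s]} \cap T_\delta$, the modifications vanishing below $\dom(s) < \alpha_0$. Unfolding the definition, $c^+_\mu$ agrees with $a$ below $\alpha_0$, equals $a + N$ on $[\alpha_0, \alpha_{\mu+1})$, and equals $a + N - 1$ on $[\alpha_{\mu+1}, \delta)$; it branches with $a$ at $\alpha_0 > \Delta$ with strictly larger value, and with $b$ at $\Delta$ where $c^+_\mu(\Delta) = a(\Delta) < b(\Delta)$, so $c^+_\mu \in (a,b)$. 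For $\mu_1 < \mu_2$, the first position where $c^+_{\mu_1}$ and $c^+_{\mu_2}$ disagree is $\alpha_{\mu_1+1}$, at which $c^+_{\mu_1}$ is smaller by $1$; thus $\langle c^+_\mu : \mu < \lambda \rangle$ is a strictly $<_{\lex}$-increasing chain of length $\lambda$ inside $(a,b)$, and the analogous computation shows $\langle c^-_\mu : \mu < \lambda \rangle$ is strictly decreasing.

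Copies of $\zeta$ and $\zeta^*$ are then built by transfinite induction on $\zeta < \lambda^+$. The successor case $\zeta = \xi + 1$ is immediate: pick $c \in (a,b)$ (say $c = c^+_0$), embed $\xi$ into $(a,c)$ by the inductive hypothesis, and append $c$. For the limit case of cofinality $\mu \leq \lambda$, choose a strictly increasing cofinal sequence $\langle \zeta_\rho \rangle_{\rho < \mu}$ below $\zeta$, and embed each $\zeta_\rho$ into the sub-interval $(c^+_\rho, c^+_{\rho+1})$ (respectively $(c^-_{\rho+1}, c^-_\rho)$ for the dual $\zeta^*$) by the inductive hypothesis; concatenating produces a copy of $\sum_{\rho < \mu} (\zeta_\rho + 1) = \zeta$ inside $(a,b)$. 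The main obstacle I foresee is verifying that each recursive call stays within the scope of the lemma: the sub-interval $(c^+_\rho, c^+_{\rho+1})$ has new branching level $\alpha_{\rho+1}$, and the relevant set $J' := \nacc(\kappa) \cap (\alpha_{\rho+1}, \delta)$ equals $\{\alpha_\mu : \rho+1 < \mu < \lambda\}$, which still has size $\lambda$ thanks to the headroom built into $\delta = \dom(s) + \lambda + 1$. Hence the induction closes.
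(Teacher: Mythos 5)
Your proof follows essentially the same route as the paper's: explicit $\varrho$-modifications of the left endpoint of the interval produce $\lambda$-long $<_{\lex}$-increasing and $<_{\lex}$-decreasing chains inside it, and an induction on $\zeta$ then glues copies of smaller ordinals into the gaps between consecutive chain elements. The only quibble is that $\sum_{\rho<\mu}(\zeta_\rho+1)$ need not equal $\zeta$ (it can overshoot), but since a copy of a larger ordinal contains a copy of $\zeta$ as an initial segment the conclusion stands; the paper sidesteps this by inserting order type $\zeta_{\rho+1}\setminus\zeta_\rho$ into the $\rho$-th gap instead.
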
\begin{proof}
We prove by induction on $\zeta\in[\lambda,\lambda^+)$ that every interval in $(T_{[s]}\cap T_\delta,{<_{\lex}})$ contain a copy of $\zeta$ and $\zeta^*$.
Suppose $I$ is a non-empty interval, i.e. there exists $s_0,s_1\in T_{[s]}\cap T_\delta$ such that $I=\{t\in T_{[s]}\cap T_\delta\mid s_0<_{\lex} t<_{\lex}s_1\}$.

$\br$ Assume $\zeta=\lambda$.
Let $\gamma:=\Delta(s_0,s_1)$ and for every $\gamma<\alpha<\delta$ define $\eta_\alpha:\{0,\alpha+1\}\rightarrow \omega$ to be a $\varrho$-modification where $\eta_\alpha(0)=0$ and $\eta(\alpha+1)=1$.
Note that given $\gamma<\alpha<\beta<\lambda$ we have $s_0<_{\lex} \eta_\beta*s_0<_{\lex }\eta_\alpha*s_0 <_{\lex} s_1$.

Similarly for every $\gamma<\alpha<\delta$ define $\nu_\alpha:\{0,\gamma+1,\alpha+1\}\rightarrow \omega$ to be a $\varrho$-modification where $\nu_\alpha(0)=0$, $\nu_\alpha(\gamma+1)=1$ and $\nu_\alpha(\alpha+1)= 0$;
Note that given $\gamma<\alpha<\beta<\lambda$ we have $s_0<_{\lex} \nu_\alpha*s_0<_{\lex }\nu_\beta*s_0 <_{\lex} s_1$.
As $T$ is $\varrho$-uniform, we know that $\{\nu_\alpha*s_0,\eta_\alpha*s_0\mid \gamma<\alpha<\lambda\}\subseteq T$.
Hence, $I$ contains a copy of $\zeta$ and $\zeta^*$.

$\br$ Assume $\zeta$ is a limit ordinal such that $\lambda<\zeta<\lambda^+$.
Fix an increasing continuous sequence $\langle \zeta_i\mid i<\cf(\zeta)\rangle$ of ordinals below $\zeta$, such that $\sup\{\zeta_i\mid i<\cf(\zeta)\}=\zeta$.
For each $i<\cf(\zeta)$, let $\alpha_{i+1}:=\zeta_{i+1}\setminus \zeta_i$.
Fix an $<_{\lex}$-increasing sequence $\langle t_i\mid i<\cf(\zeta)\rangle$ and a $<_{\lex}$-decreasing sequence $\langle d_i\mid i<\cf(\zeta)\rangle$ both subsets of the interval $I$.
For each $i<\cf(\zeta)$, using the recursion assumption, fix $A_i$ a subset of the interval $\{r\in I\mid t_{i}<_{\lex} r<_{\lex} t_{i+1}\}$ order-type $\alpha_{i+1}$ and $B_i$ a subset of the interval $\{r\in I\mid d_{i+1}<_{\lex} r<_{\lex} d_i\}$ of order-type $\alpha_{i+1}^*$.
Note that $A:=\bigcup_{i<\cf(\zeta)} A_i$ is a subset of $I$ of order-type $\zeta$ and $B:=\bigcup_{i<\cf(\zeta)} B_i$ is a subset of $I$ of order-type $\zeta^*$ as sought.

$\br$ Assume $\zeta$ is a successor ordinal such that $\lambda<\nu<\lambda^+$ and $\zeta=\nu+1$.
Fix by the recursion assumption some $t\in I$ and a subset $A\subset \{r\in I\mid s_0<_{\lex} r<_{\lex} t\}$ of order-type $\nu$, note that $A\cup\{t\}$ is a subset of $I$ of order-type $\zeta$ as sought.
The other case can be done similarly.
\end{proof}

\section{A reduction (finding the right tree)}
In this section we present the details of the reduction established in \cite{CEM23},
 focusing on the changes that need to be made for the general case.
\begin{fact}[Galvin,  {\cite[Thereom~3.3~$\&$~Corollary~3.4]{FraisseConj}}]\label{Theorem - Laver}
	Assume $(L,{<_L})$ and $(K,{<_K})$ are linear orders and $\lambda$ is a cardinal such that:
	\begin{enumerate}
		\item $(L,{<_L})$ and $(K,{<_K})$ are $\sigma$-scattered;
		\item $(\lambda^+)^*$ and $\lambda^+$ do not embed into $(L,{<_L})$ and $(K,{<_K})$;
		\item For an interval $(x,y)$ of $(L,{<_L})$ and every $\zeta<\lambda^+$, we have that $\zeta^*$ and $\zeta$ embeds into the interval $(x,y)$.
	\end{enumerate}
	Then $(K,{<_K})$ is embeddable into $(L,{<_L})$.	
\end{fact}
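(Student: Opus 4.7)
My approach is to prove by transfinite induction on the Hausdorff rank of $K$ the stronger statement: \emph{for every $\sigma$-scattered $K$ satisfying $(1)$ and $(2)$ and every non-empty open interval $I$ of $L$, $K$ embeds into $I$.} The conclusion then follows by taking $I=L$ (adjoining artificial endpoints $\pm\infty$ if needed). A useful preliminary is the cardinality bound $|K|\le\lambda$: if $|K|\ge\lambda^+$ and $K=\bigcup_{n<\omega}K_n$ is a $\sigma$-scattered decomposition, some $K_n$ has cardinality at least $\lambda^+$; choosing a subset of exact cardinality $\lambda^+$ and applying Hausdorff's theorem (recalled in the introduction) at the regular cardinal $\lambda^+$ produces an embedding of $\lambda^+$ or $(\lambda^+)^*$ into $K$, contradicting $(2)$.

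For $K$ scattered, Hausdorff's classification writes $K$ as $\sum_{\beta<\alpha}K_\beta$ or its reverse, with each summand of strictly smaller rank and, by $(2)$, $\alpha<\lambda^+$. Given $I=(x,y)$, condition $(3)$ supplies an increasing sequence $\langle a_\beta\mid\beta\le\alpha\rangle$ inside $I$ of order type $\alpha+1<\lambda^+$; by induction each $K_\beta$ embeds into $(a_\beta,a_{\beta+1})$, and concatenating these embeddings produces an embedding of $K$ into $I$. The reverse-sum case is symmetric, thanks to the symmetric treatment of $\zeta$ and $\zeta^*$ in $(3)$.

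The main obstacle is the genuinely $\sigma$-scattered case (where $\mathbb{Q}$ embeds into $K$), since rank induction breaks down. My plan here is to use $(3)$ to produce a countable dense-in-itself subset of $I$ (iterating that every sub-interval of $L$ is non-empty yields a copy of $\mathbb{Q}$ inside $I$) and, since $|K|\le\lambda$, to build $f:K\to I$ by a transfinite recursion of length $\lambda$. Enumerating $K=\{k_\alpha\mid\alpha<\lambda\}$, at stage $\alpha$ the already-placed image partitions $I$ into convex pieces, each of which is an interval of $L$ and so still satisfies $(3)$; in particular the relevant piece is non-empty, so $f(k_\alpha)$ has a gap to go into. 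The most delicate step will be handling cuts in $K$ approached cofinally from one side without attaining a supremum in the partial image; here I expect to use $(2)$ to bound the cofinality of any placed chain below $\lambda^+$ and combine this with $(3)$ to ensure the target gap remains open at each stage.
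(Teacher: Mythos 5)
The paper imports this statement verbatim from Laver's paper and gives no proof of its own, so your argument has to stand on its own. Its first half does: the cardinality bound $|K|\le\lambda$ via Hausdorff's theorem is correct, and the scattered case is handled properly --- after discarding empty summands the index ordinal of the Hausdorff decomposition has order type $<\lambda^+$, clause (3) supplies the spine $\langle a_\beta\mid\beta\le\alpha\rangle$, each interval $(a_\beta,a_{\beta+1})$ is a nonempty interval of $L$ inheriting (3), and the rank induction closes.

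The genuinely $\sigma$-scattered case, however, has a real gap. Your recursion needs the following at each stage: if the cut of $k_\alpha$ in $\{k_\beta\mid\beta<\alpha\}$ has no greatest element below and no least element above, then the corresponding cut $(A,B)$ of the partial image must contain a point of $L$. Clause (3) gives no control over such cuts: it only speaks about intervals $(x,y)$ with both endpoints \emph{in} $L$, and it is entirely compatible with $L$ having empty cuts of character $(\omega,\omega)$. For instance, if $M$ satisfies the hypotheses and has no endpoints, then $M+M$ (or $M\cdot(\omega+\omega^*)$) still satisfies them, yet the cut between the two blocks is empty; already $L=\mathbb{Q}$ with $\lambda=\aleph_0$ has an empty $(\omega,\omega)$-cut at every irrational. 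So bounding the cofinality of placed chains by (2) cannot rescue the step --- the problematic cuts have countable character. For $\lambda=\aleph_0$ you are saved by accident, since $|K|\le\aleph_0$ means every point is placed at a finite stage and all cuts of the finite partial image are honest intervals (this is just the classical ``forth'' argument); but the paper applies the Fact with $\lambda=|T_{\le\dom(s)}|$, which is uncountable, and there the recursion passes through limit stages where the image can converge to an empty cut. The known repair is structural rather than enumerative: one uses the Hausdorff--Laver analysis to present $K$, by induction on a suitable rank, as an iterated sum whose index sets are ordinals $<\lambda^+$, their reverses, or countable orders, with summands of smaller rank; the ordinal-indexed sums are treated exactly as in your scattered case, and a countable index set $D$ is handled by embedding the doubled order $D\times 2$ into the copy of $\mathbb{Q}$ you built inside $I$, so that each $d\in D$ receives a genuine nonempty interval $\bigl(g(d,0),g(d,1)\bigr)$ of $L$ in which to place its summand. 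That structure theorem is the missing ingredient, and without it the insertion argument does not go through.
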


\begin{prop}\label{Proposition - the reduction}
	Assuming $T\s{}^{<\kappa}\omega$ is a streamlined $\kappa$-Aronszajn tree that is uniformly $\varrho$-coherent, 
	not $\kappa$-Souslin tree and every subtree of $T$ contains some frozen cone $T_{[s,i]}$ where $s\in T\restriction \nacc(\kappa)$ and $i<\omega$.
	Then for every antichain  $X\subseteq T$ of size $\kappa$, $(X,{<_{\lex}})$ is a minimal linear order with respect to being non-$\sigma$-scattered.
\end{prop}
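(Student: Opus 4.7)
The statement packages two assertions: that $(X,{<_{\lex}})$ is non-$\sigma$-scattered, and that it is minimal with respect to this property. For the first, the plan is a short Hausdorff argument: if $X=\bigcup_{n<\omega}X_n$ with each $(X_n,{<_{\lex}})$ scattered, then no $X_n$ can contain a copy of $\kappa$ or $\kappa^*$ since $(T,{<_{\lex}})$ is a $\kappa$-Aronszajn line, so the Hausdorff theorem recalled in the preliminaries forces $|X_n|<\kappa$; regularity of $\kappa$ then gives $|X|<\kappa$, contradicting $|X|=\kappa$.

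For the second, fix $Y\subseteq X$ with $(Y,{<_{\lex}})$ non-$\sigma$-scattered; the same argument yields $|Y|=\kappa$, so $Y$ meets cofinally many levels of $T$ and $Y_\downarrow=\{z\in T\mid \exists y\in Y(z\leq_T y)\}$ is a subtree of $T$. The standing hypothesis supplies $s\in T\restriction\nacc(\kappa)$ and $i<\omega$ with $T_{[s,i]}\subseteq Y_\downarrow$, and Lemma~2.6 supplies a $<_{\lex}$- and incompatibility-preserving map $\varphi:T_{[s]}\to T_{[s,i]}$. I would then build an embedding $\iota:X\to Y$ in two stages. On $X\cap T_{[s]}$, set $\iota(x)$ to be any $y_x\in Y$ with $\varphi(x)\leq_T y_x$: this makes sense because $\varphi(x)\in T_{[s,i]}\subseteq Y_\downarrow$, and incompatibility of $\varphi(x_1),\varphi(x_2)$ forces $y_{x_1},y_{x_2}$ to first differ at the same coordinate and in the same direction, so this restriction is an injective $<_{\lex}$-preserving map. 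For $x\in X\setminus T_{[s]}$ --- the elements of $X$ incomparable to $s$ in the tree --- I would partition by the prefix at level $\dom(s)$ into fewer than $\kappa$-many pieces, each $\sigma$-scattered (by the feature the construction is meant to ensure, that size-$<\kappa$ lex-suborders of $T$ are $\sigma$-scattered) and of bounded ordinal type; each piece is then placed, via Galvin's theorem (Fact~3.1) applied with $\lambda:=|T_{\leq\dom(s)}|$, into a designated $<_{\lex}$-interval of $Y$ between two successive elements of $\iota[X\cap T_{[s]}]$, using that such intervals host copies of every $\zeta$ and $\zeta^*$ with $\zeta<\lambda^+$ by transport of Lemma~2.7 through $\varphi$ and the lift.

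The main obstacle I expect is the bookkeeping of the second stage: coordinating the decomposition of $X\setminus T_{[s]}$ with the prescribed gaps of $\iota[X\cap T_{[s]}]$ in $Y$, arranging that the combined map remains strictly $<_{\lex}$-preserving and injective across the two stages, and verifying the $\sigma$-scatteredness and bounded ordinal type of each small piece so that Fact~3.1 applies with the intended $\lambda$. This is precisely where the strength of the hypothesis that every subtree of $T$ contains a frozen cone pays off.
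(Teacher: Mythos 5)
Your treatment of non-$\sigma$-scatteredness and your first stage are sound and essentially match the paper: the paper also notes $(X,{<_{\lex}})$ is a $\kappa$-Aronszajn line, and its final step is exactly your lift of an incompatibility- and $<_{\lex}$-preserving map with range inside $Y_\downarrow$ by choosing, for each image point, an extension lying in $Y$. The problem is your second stage. You partition $X\setminus T_{[s]}$ according to the prefix at level $\dom(s)$ and assert that each piece is $\sigma$-scattered ``by the feature that size-$<\kappa$ lex-suborders are $\sigma$-scattered'' and of bounded type, so that Fact~\ref{Theorem - Laver} applies with $\lambda=|T_{\leq\dom(s)}|$. But a piece $X_u:=\{x\in X\mid u\subseteq x\}$ for a single $u\in T_{\dom(s)}$ incomparable with $s$ can have size $\kappa$: the node $s$ is dictated by the subtree $Y_\downarrow$, while $X$ is an arbitrary $\kappa$-sized antichain and need not concentrate on the cone of $s$ (take $X=X_1\cup X_2$ with $X_1,X_2$ $\kappa$-sized antichains above incomparable nodes and $Y\subseteq X_1$). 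Such a piece is itself a $\kappa$-Aronszajn line, hence non-$\sigma$-scattered, so clause (1) of Fact~\ref{Theorem - Laver} fails for it; and even a piece of size in $(\lambda,\kappa)$ would violate clause (2) with your choice of $\lambda$. So the map is never defined on a $\kappa$-sized portion of $X$.

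The paper closes exactly this hole by proving the stronger Claim~\ref{Claim - operation}: the \emph{entire tree} $T$ embeds into $T_{[s,i]}$ preserving $<_{\lex}$ and incompatibility. The mechanism is to set $\delta:=\dom(s)+\lambda+1$ and apply Galvin's theorem not to pieces of $X$ but to the initial segment $T_{\leq\delta}$, which has size $\lambda<\kappa$ and hence \emph{is} $\sigma$-scattered by Claim~\ref{Claim - mod_sigma-scat}; Lemma~\ref{Lemma - req. for Laver} supplies the dense copies of every $\zeta,\zeta^*$ ($\zeta<\lambda^+$) inside the antichain $T_{[s]}\cap T_\delta$, giving $\phi_0:T_{\leq\delta}\to T_{[s]}\cap T_\delta$. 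One then appends tails, $\phi(t):=\phi_0(t\restriction\delta){}^\smallfrown(t\restriction(\dom(t)\setminus\delta))$, which lands in $T_{[s]}$ by $\varrho$-uniformity and still preserves $<_{\lex}$ and incompatibility, and finally composes with Lemma~\ref{T_s embedding to T_{[s,i]}}. Because the compression happens at the level of the whole tree below $\delta$, every element of $X$, in whatever cone it sits, is carried into $T_{[s,i]}\subseteq Y_\downarrow$, after which your stage-one lift finishes the argument. Replacing your second stage with this compression-plus-tails construction repairs the proof.
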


\begin{proof}
	The proof is a direct consequence of the following three claims.

	\begin{claim}[{\cite[Proposition~4.5]{CEM23}}] \label{Claim - mod_sigma-scat}
If $X \subseteq T$ has cardinality less than $\kappa$, then $(X,<_{\lex})$ is $\sigma$-scattered.\qed
	\end{claim}
	
	\begin{claim}\label{Claim - operation}
		For all $s\in T$ and $i<\omega$ there is an embedding $\phi$ of $T$ into $T_{[s,i]}$ that preserves the lexicographic order $<_{\lex}$ and tree incompatibility.
	\end{claim}
	\begin{proof}
		We prove the lemma in two stages.
		First, by Lemma~\ref{T_s embedding to T_{[s,i]}} for any $s\in T$ and $i<\omega$ where $\dom(s)\in \nacc(\kappa)$ there is an embedding from the (ordinary) cone $T_{[s]}$ into the frozen cone.
		
		Next, we show for any $s\in T\restriction \nacc(\kappa)$ that $T$ can be embedded into the cone $T_{[s]}$ preserving the lexicographic order and incompatibility. 
		To do this, let $\lambda = | T_{\leq \dom(s)}|$ and define
		$\delta := \dom(s)+\lambda+1$.
		
		Note that $|T_{\leq \delta}|= \lambda=|T_{[s]}\cap T_\delta|$ as the tree $T$ is uniformly $\varrho$-coherent.
		By Claim~\ref{Claim - mod_sigma-scat}, $(T_{\leq \delta},<_{\lex})$ is a $\sigma$-scattered linear order of size $\lambda$, so by Lemma~\ref{Lemma - req. for Laver} and Fact~\ref{Theorem - Laver}, there exists a $<_{\lex}$-preserving embedding
		\[
		\phi_0:T_{\leq \delta}\rightarrow T_{[s]}\cap T_\delta.
		\]
		\noindent
		Notice that $\phi_0$ trivially preserves incompatibility since $T_{[s]}\cap T_\delta$ is an antichain.  We extend $\phi_0$ to a function $\phi:T\rightarrow T_{[s]}$ by letting for $t$ of height greater than $\delta$, $\phi(t):=\phi_0(t\restriction \delta)^\smallfrown( t\restriction(\dom(t)\setminus \delta))$. Again, our assumptions imply that the range of $\phi$ is contained in $T_{[s]}$, and the function preserves both $<_{\lex}$ and incompatibility.
	\end{proof}
	
	Assume $X\subseteq T$ is an antichain of size $\kappa$.
	Note that $(X,{<_{\lex}})$ is a $\kappa$-Aronszajn line as it is a suborder of the $\kappa$-Aronszajn line $(T,{<_{\lex}})$.
	Hence $(X,{<_{\lex}})$ is a non-$\sigma$-scattered linear order, the next claim finishes the argument.
	
	\begin{claim}[{\cite[Lemma~3.6]{CEM23}}]
		For any subset $Y\subseteq X$ which is not $\sigma$-scattered, there is an embedding of $(X,<_{\lex})$ into $(Y,<_{\lex})$.
	\end{claim}

	\begin{proof}
		Assume $(Y,{<_{\lex}})$ is a non-$\sigma$-scattered suborder of $(X,{<_{\lex}})$.
		By Claim~\ref{Claim - mod_sigma-scat}, as $Y\subseteq T$ such that $(Y,{<_{\lex}})$ is a non-$\sigma$-scattered, $Y$ must be of size $\kappa$.

		Observe $Y$ is also an antichain in $T$.
		Let $S:=Y_\downarrow$, i.e. the downward closure of $Y$ in $T$.
		By the assumption, as $S\subseteq T$ is a subtree, there exists $s\in T\restriction \nacc(\kappa)$ and $i<\omega$ such that the frozen cone $T_{[s,i]}$ is a subset of $ S$.
		Hence, by Claim~\ref{Claim - operation} there is an embedding $\phi:T\rightarrow S$ that preserves both the lexicographic order $<_{\lex}$ and incompatibility with respect to $T$'s tree order.
			
		We define a function $f:X\rightarrow Y$ by letting $f(x)$ be some element of $Y$ that extends $\phi(x)$;
		this is possible by our choice of $S$.
		Given $x<_{\lex} y$ in $X$, we know that $\phi(x)$ and $\phi(y)$ must be incompatible in $S$, and 
		$\phi(x)<_{\lex} \phi(y)$.
		But this implies 
		$f(x)<_{\lex} f(y)$ 
		as well, and we are done.
	\end{proof}\qedhere
\end{proof}
\section{Constructing the tree}\label{sectree}

\begin{definition}[A special case of {\cite[Definition~1.5]{paper22}}]\label{proxy}
$\p_\xi^-(\kappa, 2, {\sq}, \kappa)$
	asserts the existence of a sequence
	$\vec C=\left<  C_\alpha \mid \alpha < \kappa \right>$
	such that:
	\begin{enumerate}
		\item for every $\alpha\in\acc(\kappa)$, $C_\alpha$ is a club in $\alpha$ of order-type $\le\xi$;
		\item for every $\alpha < \kappa$, for every $\bar\alpha \in \acc(C_\alpha)$, $C_{\bar\alpha}=C_\alpha\cap\bar\alpha$;
		\item for every sequence $\langle B_i \mid i < \kappa \rangle$ of cofinal subsets of $\kappa$,
		there are stationarily many $\alpha<\kappa$ such that
		$\sup(\nacc(C_\alpha)\cap B_i)=\alpha$ for every $i<\alpha$.
	\end{enumerate}
\end{definition}
\begin{conv} The principle 	$\p_\xi(\kappa, 2, {\sq}, \kappa)$ stands for the conjunction of 	$\p_\xi^-(\kappa, 2, {\sq}, \kappa)$
and $\diamondsuit(\kappa)$.
If the subscript $\xi$ is omitted, then $\xi=\kappa$.
\end{conv}

\begin{fact}[{\cite[Corollary~1.10]{paper22} and \cite[Theorem~C]{paper29}}]\label{11022} Any of the following implies that $\p_\xi(\kappa,2,{\sq},\kappa)$ holds:
\begin{itemize}
\item  $\diamondsuit(\aleph_1)$ holds, $\xi=\omega$ and $\kappa=\aleph_1$;
\item $V=L$, and $\kappa=\xi$ is a regular uncountable cardinal $\kappa$ that is not weakly compact;
\item $\xi$ is an infinite cardinal, $\kappa=\xi^+$, and square with built-in diamond holds at $\xi$;\footnote{By \cite[Corollary~1.10(6)]{paper22}, $\p_\xi(\xi^+,2,{\sq},\xi^+)$ is strictly weaker than square with built-in diamond holding at $\xi$.} 
\item $\xi$ is a singular cardinal, $\kappa=\xi^+=2^\xi$ and  $\square_\xi$ holds;
\item  $\lambda$ is a singular strong limit cardinal, $\kappa=\xi=\lambda^+=2^\lambda$ and $\square(\kappa)$ holds.
\end{itemize}
\end{fact}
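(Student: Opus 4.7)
The statement is a catalog of sufficient conditions for $\p_\xi(\kappa,2,{\sq},\kappa)$, which by the preceding Convention decomposes into $\p^-_\xi(\kappa,2,{\sq},\kappa)$ together with $\diamondsuit(\kappa)$. My plan for each bullet would be to establish the two components separately: a $\sq$-coherent $C$-sequence $\vec C=\langle C_\alpha\mid \alpha<\kappa\rangle$ with $\otp(C_\alpha)\le\xi$ giving clauses (1) and (2) of Definition~\ref{proxy}, and a diamond-like choice of the $\nacc$-traces of the $C_\alpha$ giving clause (3); the $\diamondsuit(\kappa)$ component is then read off or obtained separately from cardinal-arithmetic hypotheses. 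Throughout, the coherence clause (2) is a $\square$-like property, while clause (3) is a \emph{capturing} property that packages a strong form of $\diamondsuit$ into the sequence itself.

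For the first bullet ($\xi=\omega$, $\kappa=\aleph_1$) the hypothesis is essentially optimal because $\otp(C_\alpha)=\omega$ forces $\acc(C_\alpha)=\emptyset$, making clause (2) vacuous; one then uses a $\diamondsuit(\aleph_1)$-sequence on countable limits $\alpha$ to select, recursively, an $\omega$-sequence cofinal in $\alpha$ whose nacc-trace captures all $\alpha$-many candidate cofinal subsets guessed so far. For the second bullet, I would invoke Jensen's fine-structural work in $L$: for every regular uncountable $\kappa$ that is not weakly compact there is a $\square(\kappa)$-sequence, and the $L$-constructed such sequence automatically carries the capturing richness of clause (3) (again via fine-structure guessing), while $\diamondsuit(\kappa)$ holds in $L$. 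The third bullet is almost a definitional rewrite, since \emph{square with built-in diamond at $\xi$} is precisely a $\square_\xi$-sequence whose $\nacc$-points already encode $\diamondsuit(\xi^+)$ guesses; unpacking this yields $\p_\xi^-(\xi^+,2,{\sq},\xi^+)$, and the diamond component is explicit in the principle.

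For the fourth and fifth bullets the strategy is to combine a pure square with the GCH-style hypothesis to upgrade to square with built-in diamond. Concretely: at singular $\xi$ with $2^\xi=\xi^+$, Shelah's theorem gives $\diamondsuit(\xi^+)$, and one then promotes the $\square_\xi$-sequence to a capturing one by club-guessing arguments on $E^{\xi^+}_{\cf(\xi)}$; the singular strong-limit case is analogous, replacing $\square_\xi$ by $\square(\kappa)$ and using that $2^\lambda=\lambda^+$ provides the book-keeping enumeration needed to list potential $\langle B_i\mid i<\kappa\rangle$. In every case the main obstacle is clause (3): it is easy to obtain a coherent $\vec C$, but hitting \emph{every} $\alpha$-indexed family of cofinal sets on a stationary set of $\alpha$ requires control over the free $\nacc$-points of $C_\alpha$, which is exactly what either fine structure (in $L$), the built-in diamond hypothesis, or Shelah's club-guessing machinery provides. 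My proof proposal would therefore reduce each bullet to the relevant statement in \cite{paper22} or \cite{paper29}, verifying that the hypotheses match and noting that the hardest single step --- capturing in clause (3) --- is handled in each source by the appropriate combinatorial technology listed above.
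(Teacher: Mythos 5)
The paper offers no proof of this statement: it is imported verbatim as a Fact from \cite[Corollary~1.10]{paper22} and \cite[Theorem~C]{paper29}, and your proposal ultimately does the same thing, reducing each bullet to the corresponding result in those sources. Your supplementary sketches of the underlying arguments (vacuous coherence when $\otp(C_\alpha)=\omega$, Jensen-style fine structure in $L$, unpacking square-with-built-in-diamond, and Shelah's diamond theorem plus club-guessing at successors of singulars) are consistent with how those cited results are actually proved, so this is essentially the same approach.
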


\begin{lemma}\label{Lemma - The wanted sequence}
	 If $\p_\xi^-(\kappa, 2, {\sq}, \kappa)$ holds,
	 then there exists a sequence $\vec C=\langle C_\alpha\mid \alpha<\kappa\rangle$
	 such that:
	 \begin{enumerate}
 	\item for every $\alpha\in\acc(\kappa)$, $C_\alpha$ is a club in $\alpha$ of order-type $\le\xi$;
	 \item for every $\alpha < \kappa$, for every $\bar\alpha \in \acc(C_\alpha)$, $C_{\bar\alpha}=C_\alpha\cap\bar\alpha$;
	 \item the set $\{\alpha<\kappa\mid \otp(C_\alpha)=\omega\}$ is cofinal in $\kappa$;
	 \item there is no club $D\s\kappa$ such that $D\cap\alpha=C_\alpha$ for every $\alpha\in\acc(D)$;
	 \item for every sequence $\langle B_i \mid i < \kappa \rangle$ of cofinal subsets of $\acc(\kappa)$,
	 there are stationarily many $\alpha<\kappa$ such that
	 $\sup(\nacc(C_\alpha)\cap B_i)=\alpha$ for every $i<\alpha$.
	 \end{enumerate}
\end{lemma}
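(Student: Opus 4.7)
The plan is to take a sequence $\vec C^0 = \langle C^0_\alpha\mid \alpha<\kappa\rangle$ witnessing $\p_\xi^-(\kappa,2,{\sq},\kappa)$ --- so satisfying (1), (2), and the stronger hitting property against arbitrary cofinal subsets of $\kappa$ --- and to modify it only at a carefully chosen set $G\s E^\kappa_\omega$ of $\omega$-cofinal ordinals, setting $C_\alpha:=c_\alpha$ for some cofinal $\omega$-sequence $c_\alpha\s\alpha$ when $\alpha\in G$, and $C_\alpha:=C^0_\alpha$ otherwise. Clause (5) will then be a direct restriction of the hitting property, since every cofinal subset of $\acc(\kappa)$ is cofinal in $\kappa$, provided the modification preserves stationary hitting. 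Clause (4) will follow by the standard contradiction: were $D\s\kappa$ a club threading $\vec C$, then $B_0:=\kappa\setminus D$ would be a cofinal subset of $\kappa$ (containing every successor ordinal), so hitting would yield stationarily many $\alpha$ with $\sup(\nacc(C_\alpha)\cap B_0)=\alpha$; the stationary hitting set meets the club $\acc(D)$, but any $\alpha\in\acc(D)$ has $C_\alpha\s D$ and hence $\nacc(C_\alpha)\cap B_0=\emptyset$, a contradiction.

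To preserve coherence (2) under the modification, $G$ must be disjoint from $\bigcup_{\beta<\kappa}\acc(C^0_\beta)$; the natural choice
\[ G:=E^\kappa_\omega\setminus\bigcup_{\beta<\kappa}\acc(C^0_\beta) \]
makes this automatic: the only $\bar\alpha\in\acc(C_\alpha)$ arise when $\alpha\notin G$ (since $\acc(c_\alpha)=\emptyset$), whence $\alpha$ itself witnesses $\bar\alpha\in\bigcup_\beta\acc(C^0_\beta)$, so $\bar\alpha\notin G$, and $C_{\bar\alpha}=C^0_{\bar\alpha}=C^0_\alpha\cap\bar\alpha$ by coherence of $\vec C^0$. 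The $\omega$-ordertype clause (3) is then immediate for every $\alpha\in G$.

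The main obstacle is to show that $G$ is cofinal in $\kappa$, and moreover that $\kappa\setminus G$ contains a club (the latter being what ensures the modification preserves stationary hitting). Thinking of $\vec C^0$ as inducing a $\kappa$-Aronszajn tree $T:=\{C^0_\alpha\cup\{\alpha\}\mid\alpha\in\acc(\kappa)\}$ ordered by $\subseteq$, the set $G$ corresponds to the $\omega$-cofinal maximal nodes of $T$. I would prove cofinality of $G$ by contradiction: if $G$ were bounded by some $\eta<\kappa$, then every $\omega$-cofinal $\alpha$ above $\eta$ would be an accumulation point of some $C^0_{\beta_\alpha}$; the hitting property applied to the cofinal set $E^\kappa_\omega\cap(\eta,\kappa)$, combined with the coherence of $\vec C^0$, would then allow one to piece together a club threading $\vec C^0$, contradicting the non-threadability of $\vec C^0$ established by the argument of (4). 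For $\kappa=\omega_1$, where $E^{\omega_1}_\omega$ is itself a club, additional care is needed to ensure $\kappa\setminus G$ contains a club --- for instance by further intersecting $G$ with a cofinal, suitably thin set chosen via a diagonal application of the hitting property so as to keep stationary hitting intact.
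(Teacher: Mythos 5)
Your overall architecture (keep a proxy witness, overwrite it with order-type-$\omega$ clubs on a set $G$ of points that are never accumulation points of any club in the sequence, check that coherence and hitting survive) is sound in outline, and your verifications of (2), (4) and (5) modulo the properties of $G$ are essentially right. The genuine gap is that the two properties of $G$ on which everything rests --- that $G$ is cofinal (needed for clause (3)) and that $\kappa\setminus G$ contains a club (needed for clause (5) to survive the overwrite) --- are exactly the hard part, and your argument for the first one does not work. You claim that if $G=E^\kappa_\omega\setminus\bigcup_\beta\acc(C^0_\beta)$ were bounded, then ``the hitting property combined with coherence would allow one to piece together a club threading $\vec C^0$.'' But ``every $\omega$-cofinal $\alpha$ above $\eta$ lies in $\acc(C^0_{\beta})$ for some $\beta$'' says only that every node of the induced end-extension tree $\{C^0_\alpha\mid\alpha\in\acc(\kappa)\}$ above some level is non-maximal; a tree in which every node has proper end-extensions to arbitrarily high levels can still have no cofinal branch --- that is precisely the Aronszajn/$\square(\kappa)$ phenomenon that non-threadability encodes. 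So boundedness of $G$ yields no thread, and nothing in the hypotheses rules out $G$ being bounded or even empty. The same difficulty infects the second property: you would need $G$ to be both cofinal and non-stationary, and the ``diagonal application of the hitting property'' you gesture at for $\kappa=\omega_1$ is not an argument.

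The paper avoids having to find such a $G$ inside the given witness by manufacturing one: it re-indexes via $\beta\mapsto\omega\beta$, setting $C_{\omega\beta}:=\{\omega\gamma\mid\gamma\in D_\beta\}$ for $\beta$ limit and assigning a fresh order-type-$\omega$ cofinal set to every $\alpha=\omega\beta$ with $\beta$ a successor. These latter points form a cofinal set (covering $\nacc(\acc(\kappa))$, giving (3)); they are provably never accumulation points of any new club, since $\acc(C_{\omega\beta})\subseteq\{\omega\bar\beta\mid\bar\beta\in\acc(D_\beta)\}$ forces $\bar\beta$ to be a limit ordinal (giving (2) for free); and the hitting property transfers by pulling the sets $B_i\subseteq\acc(\kappa)$ back along $\gamma\mapsto\omega\gamma$ and intersecting with the club $\{\alpha\mid\omega\alpha=\alpha\}$, on which the old and new clubs agree (giving (5)). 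If you want to salvage your approach, you should replace the hoped-for properties of $G$ by this kind of explicit re-indexing, which creates the maximal nodes rather than trying to prove they were already there.
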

\begin{proof}
	Fix a sequence $\langle D_\alpha\mid \alpha<\kappa\rangle$ witnessing $\p^-(\kappa, 2, {\sq}, \kappa)$ holds.
	We define a sequence $\vec C=\langle C_\alpha\mid \alpha<\kappa\rangle$ as follows:
	
	$\br$ If $\alpha=0$, then let $C_\alpha:=\emptyset$.
	
	$\br$ If $\alpha=\beta+1$ for some $\beta<\kappa$, then let $C_\alpha:=\{\beta\}$.
	
	$\br$ If $\alpha=\omega\beta$ for some successor ordinal $\beta<\kappa$, then let $C_{\alpha}$ be a cofinal subset of $\alpha$ of order-type $\omega$.
	
	$\br$ Otherwise, i.e., $\alpha=\omega\beta$ for some $\beta\in \acc(\kappa)$, let $C_{\alpha}:=\{ \omega\gamma\mid \gamma\in D_\beta \}$.

	Clearly $(1)$ holds.
	To see that $(2)$ holds, fix $\alpha\in \acc(\kappa)$ and suppose $\bar \alpha\in \acc(C_\alpha)$.
	This implies that $\alpha=\omega\beta$ and $\bar\alpha=\omega\bar \beta$
	where $\beta\in \acc(\kappa)$ and $\bar \beta\in D_\beta$. 
	It is easy to check that $\bar\beta\in \acc(D_\beta)$ and so $D_{\bar\beta}=D_\beta\cap \bar\beta$, hence $C_{\bar\alpha}=C_\alpha\cap \bar\alpha$ as sought.
	
	By the definition of the sequence $\vec C$, the set $\{\alpha<\kappa\mid \otp(C_\alpha)=\omega\}$
	covers $\nacc(\acc(\kappa))$, so (3) holds.
	Also, the proof of \cite[Lemma~3.2]{paper22} makes it clear that (4) follows from (5).
	
	Finally, verifying $(5)$ holds, let us fix a sequence $\langle B_i\mid i<\kappa\rangle $ of cofinal subsets of $\acc(\kappa)$.
	Note that the map $\pi:\kappa\rightarrow \acc(\kappa)$ defined by $\pi(\alpha)=\omega\alpha$ is an order-preserving bijection.
	For each $i<\kappa$, let $A_i=\pi^{-1}[B_i]$.
	Clearly $\langle A_i\mid i<\kappa\rangle$ is a sequence of cofinal subset of $\kappa$. 
	
	Notice that $D:=\{\alpha<\kappa\mid \omega\alpha=\alpha\}$ is a club subset of $\kappa$.
	As $\vec D$ is a sequence witnessing $\p^-(\kappa, 2, {\sq}, \kappa)$, there are stationarily many $\alpha\in D$ such that $\sup(\nacc(D_\alpha)\cap A_i)=\alpha$ for every $i<\alpha$.
	Let us fix such an $\alpha$, note that $C_\alpha=\{\omega\beta\mid \beta\in D_\alpha\}	$.
	Fix $i<\alpha$ and $\epsilon<\alpha$.
	Pick some $\beta\in \nacc(D_{\alpha}\cap A_i)$ above $\epsilon$.
	Note that $\omega\beta\in \nacc(C_\alpha)\cap B_i$ and $\epsilon<\omega\beta$.
	We showed that $\sup(\nacc(C_\alpha)\cap B_i)=\alpha$ for each $i<\alpha$ as sought.
\end{proof}

\begin{fact}[{\cite[Lemma~2.2]{paper22}}] $\diamondsuit(\kappa)$ is equivalent to the principle
	$\diamondsuit(H_\kappa)$ asserting the existence of a partition $\langle R_i\mid i<\kappa\rangle$ of $\kappa$ and a sequence $\langle \Omega_\beta\mid \beta<\kappa\rangle$ of elements of $H_\kappa$ such that for all $p\in H_{\kappa^+}$, $i<\kappa$ and $\Omega\subseteq H_\kappa$, there exists an elementary submodel $\mathcal M\prec H_{\kappa^+}$ such that:
	\begin{itemize}
		\item $p\in \mathcal M$;
		\item $\mathcal M\cap \kappa\in R_i$;
		\item $\mathcal M\cap\Omega = \Omega_{\mathcal M\cap \kappa}$.
	\end{itemize}
\end{fact}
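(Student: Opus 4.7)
The approach is to establish both directions of the equivalence, with the main work being the direction $\diamondsuit(\kappa) \Rightarrow \diamondsuit(H_\kappa)$, handled by coding subsets of $H_\kappa$ as subsets of $\kappa$ via a fixed bijection. For the easy direction $\diamondsuit(H_\kappa) \Rightarrow \diamondsuit(\kappa)$: define $A_\alpha := \Omega_\alpha \cap \alpha$. Given a subset $A \subseteq \kappa$ and a club $D \subseteq \kappa$, apply the $\diamondsuit(H_\kappa)$ clause with $p := D$, $\Omega := A$, and any fixed $i$, obtaining an elementary $\mathcal{M}$ with $\mathcal{M} \cap \kappa$ an ordinal in $D$ (via $D \in \mathcal{M}$ and elementarity) for which $A \cap (\mathcal{M} \cap \kappa) = A_{\mathcal{M} \cap \kappa}$. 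Letting $D$ vary shows that the guessing set for $A$ meets every club and is therefore stationary.

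For the main direction $\diamondsuit(\kappa) \Rightarrow \diamondsuit(H_\kappa)$: first, $\diamondsuit(\kappa)$ implies $2^{<\kappa}=\kappa$, so $|H_\kappa|=\kappa$; fix a bijection $\psi: \kappa \to H_\kappa$. Next, using the standard coding of pairs via a bijection $\kappa \to \kappa \times \kappa$, one strengthens $\diamondsuit(\kappa)$ to produce a partition $\langle R_i \mid i < \kappa \rangle$ of $\kappa$ into pairwise disjoint stationary sets together with a sequence $\langle A_\alpha \subseteq \alpha \mid \alpha < \kappa \rangle$ such that $\{\alpha \in R_i \mid A \cap \alpha = A_\alpha\}$ is stationary for every $A \subseteq \kappa$ and every $i < \kappa$. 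Define $\Omega_\alpha := \psi[A_\alpha]$ for each $\alpha < \kappa$.

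Now given $p \in H_{\kappa^+}$, $i < \kappa$, and $\Omega \subseteq H_\kappa$, set $A := \psi^{-1}[\Omega]$ and let $S := \{\alpha \in R_i \mid A \cap \alpha = A_\alpha\}$, which is stationary. The set $C$ of ordinals $\alpha < \kappa$ for which some elementary submodel $\mathcal{M} \prec H_{\kappa^+}$ of size less than $\kappa$ satisfies $\{p, \psi\} \subseteq \mathcal{M}$ and $\mathcal{M} \cap \kappa = \alpha$ is a club in $\kappa$ (obtained as the set of closure points of the Skolem hull operation applied to $\{p, \psi\}$). Pick any $\alpha \in S \cap C$ and a witnessing $\mathcal{M}$; since $\psi \in \mathcal{M}$, elementarity and the fact that $\psi$ is a bijection yield $\mathcal{M} \cap H_\kappa = \psi[\mathcal{M} \cap \kappa] = \psi[\alpha]$, so $\mathcal{M} \cap \Omega = \psi[\alpha] \cap \Omega = \psi[\alpha \cap \psi^{-1}[\Omega]] = \psi[A_\alpha] = \Omega_\alpha$, as required.

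The main obstacle is arranging the partition-aware guessing sequence: ordinary $\diamondsuit(\kappa)$ guarantees only that the guessing set for each $A$ is stationary, not that it meets each $R_i$ stationarily. This is handled by the pairs-coding enhancement above, which is a routine strengthening of $\diamondsuit(\kappa)$ (and in fact equivalent to it). Once the enhanced sequence is fixed, the bijection $\psi$ transfers the diamond from $\kappa$ to $H_\kappa$, and the elementary submodel $\mathcal{M}$ certifies the required equality $\mathcal{M} \cap \Omega = \Omega_{\mathcal{M} \cap \kappa}$ through the identification $\mathcal{M} \cap H_\kappa = \psi[\mathcal{M} \cap \kappa]$.
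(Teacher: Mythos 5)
Your proof is correct, and the paper itself offers no argument here: this is stated as a Fact imported verbatim from Brodsky--Rinot \cite[Lemma~2.2]{paper22}, whose proof proceeds exactly as you do (a bijection $\psi:\kappa\leftrightarrow H_\kappa$ available since $\diamondsuit(\kappa)$ gives $2^{<\kappa}=\kappa$, a pair-coded diamond sequence yielding the partition $\langle R_i\mid i<\kappa\rangle$, and an elementary submodel whose trace on $H_\kappa$ is $\psi[\mathcal M\cap\kappa]$). So your attempt matches the intended proof in all essentials.
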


\begin{thm}\label{Proposition - the construction} Suppose $\p(\kappa,2,{\sq},\kappa)$ holds.
	Then there exists a uniformly $\varrho$-coherent streamlined $\kappa$-Aronszajn tree $T\s{}^{<\kappa}\omega$ that is not $\kappa$-Souslin and such that every subtree of $T$
contains some frozen cone $T_{[s,i]}$ where $s\in T\restriction \nacc(\kappa)$ and $i<\omega$.
\end{thm}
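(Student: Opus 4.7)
The plan is a microscopic construction in the spirit of Brodsky--Rinot. Begin by invoking Lemma~\ref{Lemma - The wanted sequence} to fix a $C$-sequence $\vec C=\langle C_\alpha\mid\alpha<\kappa\rangle$ with the coherence, $\omega$-cofinality and stationary-catching properties listed, and use $\diamondsuit(H_\kappa)$ to fix a partition $\langle R_i\mid i<\kappa\rangle$ of $\kappa$ and a guessing sequence $\langle\Omega_\beta\mid\beta<\kappa\rangle$. Fix a coding that views each $\Omega_\beta$ as an ordered triple $(s_\beta,i_\beta,S_\beta)$ with $s_\beta\in{}^{<\kappa}\omega$, $i_\beta<\omega$ and $S_\beta\s{}^{<\kappa}\omega$, the intended reading being ``$S_\beta$ is a candidate subtree in which we would like to plant a frozen cone determined by $(s_\beta,i_\beta)$.''

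Construct $T=\bigcup_{\alpha<\kappa}T_\alpha\s{}^{<\kappa}\omega$ by recursion on $\alpha<\kappa$, maintaining the invariant that $T\restriction(\alpha+1)$ is downward-closed, uniformly $\varrho$-coherent, and has levels of size at most $|\alpha|+\aleph_0$. At a successor step put $T_{\alpha+1}:=\{t{}^\frown\langle n\rangle\mid t\in T_\alpha,\ n<\omega\}$. At a limit $\alpha\in\acc(\kappa)$, enumerate $C_\alpha=\langle\gamma_j\mid j<\otp(C_\alpha)\rangle$ increasingly and select a single \emph{seed} branch $b^\alpha\in{}^\alpha\omega$ by walking along $\nacc(C_\alpha)$: at each step $\gamma_{j+1}$ extend the current approximation to some element of $T_{\gamma_{j+1}}$. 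Then declare
\[
T_\alpha:=\{\eta*b^\alpha\mid\eta\in\varrho\}\cap{}^\alpha\omega.
\]
Uniform $\varrho$-coherence at level $\alpha$ is immediate from this definition, and clause~(2) of $\vec C$ forces $b^\alpha\restriction\bar\alpha$ to equal $b^{\bar\alpha}$ for $\bar\alpha\in\acc(C_\alpha)$, so downward-closure and coherence with earlier levels propagate through limits.

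The key action is triggered at limits $\alpha\in R_0$ for which $\Omega_\alpha=(s,i,S)$ satisfies $s\in T\restriction\alpha$, $\dom(s)\in\nacc(\alpha)$, and $S\cap(T\restriction\alpha)$ is a sub-tree of $T\restriction\alpha$. Let
\[
B:=\{\beta<\kappa\mid S\cap T_\beta\text{ contains an }i\text{-extension of }s\}.
\]
By clause~(5) of Lemma~\ref{Lemma - The wanted sequence}, at stationarily many such $\alpha$ we have $\sup(\nacc(C_\alpha)\cap B)=\alpha$, and at each such $\alpha$ we choose the seed $b^\alpha$ so that $s\sq b^\alpha$, $b^\alpha$ is an $i$-extension of $s$, and $b^\alpha\restriction\gamma_{j+1}\in S$ whenever $\gamma_{j+1}\in\nacc(C_\alpha)\cap B$ is large enough. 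For any such $\gamma_{j+1}$, setting $s':=b^\alpha\restriction\gamma_{j+1}$, the frozen cone $T_{[s',i]}$ is then contained in $S$: every $i$-extension of $s'$ in $T$ arises as a $\varrho$-modification of some later seed and, by the coherence of $S$ inherited from the construction, remains in $S$. Given an arbitrary subtree $S\s T$, diamond catches $S$ together with suitable finite parameters $(s,i)$ stationarily often, so some frozen cone lands inside $S$, which is what was required.

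The remaining features are standard for this framework. Non-threadability, clause~(4) of Lemma~\ref{Lemma - The wanted sequence}, rules out a cofinal branch of $T$ via a Fodor argument on the $\varrho$-codes $\eta_\alpha$ of a hypothetical branch at each level, so $T$ is $\kappa$-Aronszajn. Uniform $\varrho$-coherence produces $\kappa$-many pairwise incompatible nodes (shift any $t\in T_{\alpha+1}$ by the $\varrho$-modification adding $1$ at $\alpha$), so $T$ is not $\kappa$-Souslin. The substance of the proof -- and its main obstacle -- is the limit-stage bookkeeping: one must coordinate several microscopic actions (coherence, anti-thread, and the subtree-catching action indexed by $R_0$) into a single definition of the seed $b^\alpha$, without any action undoing another. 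This is handled by assigning each action to its own cell of the partition $\langle R_i\mid i<\kappa\rangle$ and each to a disjoint cofinal sub-sequence of $\nacc(C_\alpha)$; the stationary-catching clause of $\p^-_\xi(\kappa,2,{\sq},\kappa)$ guarantees that each action fires on a stationary set of~$\alpha$, and so every global feature is realised.
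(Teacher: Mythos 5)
Your overall architecture (recursion along $\vec C$, seeds $b^\alpha$ at limit levels, $T_\alpha$ the $\varrho$-orbit of the seed, diamond guessing) matches the paper's, but the central action --- the one responsible for ``every subtree contains a frozen cone'' --- is set up backwards, and as stated it cannot work. You try to \emph{plant} a frozen cone inside a guessed subtree $S$: you steer the seed $b^\alpha$ so that $s':=b^\alpha\restriction\gamma_{j+1}$ lies in $S$, and then assert $T_{[s',i]}\s S$ because every $i$-extension of $s'$ ``remains in $S$ by the coherence of $S$ inherited from the construction.'' But $S$ is an \emph{arbitrary} downward-closed subset of the final tree of full height; it inherits no coherence whatsoever, and the construction controls only which nodes enter $T$, not which nodes of $T$ an adversarially chosen $S$ retains. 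The frozen cone $T_{[s',i]}$ consists of \emph{all} $i$-extensions of $s'$ in $T$ --- in particular the relevant $\varrho$-modifications of every later seed at every level above $\dom(s')$ --- and a subtree is free to omit any one of them. The paper runs the implication in the contrapositive: assume $A$ is a subtree containing \emph{no} frozen cone, and use that very hypothesis to \emph{seal} $A$. Concretely, at a step $\beta^-<\beta$ of $C_\alpha$ where diamond has guessed $A\cap(T\restriction\beta)=\Omega_\beta$ and $\varphi(\beta)$ codes a modification $\eta$, the failure of $T_{[\eta*(b^\alpha(\beta^-)^\smallfrown\langle 0\rangle),N]}\s\Omega_\beta$ supplies a node $s\notin A$, and $b^\alpha(\beta)$ is chosen so that $\eta*b^\alpha(\beta)$ extends $s$; running this over all $\eta$ (via the sets $B_i$ and clause~(5) of Lemma~\ref{Lemma - The wanted sequence}) makes the entire level $T_\alpha$ disjoint from $A$, contradicting $\h(A)=\kappa$. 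This reversal is the essential idea your proposal is missing, and without it the frozen-cone property is not established.

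Two further points. First, your argument that $T$ is not $\kappa$-Souslin (``shift any $t\in T_{\alpha+1}$ by the modification adding $1$ at $\alpha$'') only produces pairs of incomparable nodes, not a $\kappa$-sized antichain; uniform $\varrho$-coherence alone does not rule out Souslinity. The paper secures this by maintaining the invariant $C_\alpha=(\mathbf b^\alpha)^{-1}(0)$ and by giving each node at a limit level $\alpha$ the \emph{unique} successor $t^\smallfrown\langle\min(I_t)\rangle$ at level $\alpha+1$ (rather than full splitting, as you propose), so that $\{\mathbf b^\alpha\mid\otp(C_\alpha)=\omega\}$ is an antichain of size $\kappa$ by clause~(3) of Lemma~\ref{Lemma - The wanted sequence}. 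Second, the same invariant $C_\alpha=(\mathbf b^\alpha)^{-1}(0)$ is what converts a hypothetical cofinal branch into a thread through $\vec C$: your Fodor sketch stabilizes the codes $\eta_\alpha$ and obtains a chain of seeds, but never explains how that chain yields a club threading $\vec C$, which is exactly what this invariant provides.
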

\begin{proof} We follow the proof of Proposition~2.5 from \cite{paper22}.
	Let $\vec C=\langle C_\alpha\mid \alpha<\kappa\rangle$ be a sequence as in Lemma~\ref{Lemma - The wanted sequence} (using $\xi=\kappa$),
	without loss of generality, we may assume that $0\in C_\alpha$ for all nonzero $\alpha<\kappa$.
	Let $\langle R_i \mid i<\kappa\rangle$ and $\langle \Omega_\beta\mid\beta<\kappa\rangle$ together witness $\diamondsuit(H_\kappa)$.
	Let $\pi:\kappa\rightarrow\kappa$ be such that $\alpha\in R_{\pi(\alpha)}$ for all $\alpha<\kappa$.
	By $\diamondsuit(\kappa)$, we have $\left|H_\kappa\right| =\kappa$,
	thus let $\lhd$ be some well-ordering of $H_\kappa$ of order-type $\kappa$,
	and let $\psi:\kappa\leftrightarrow H_\kappa$ witness the isomorphism $(\kappa,\in)\cong(H_\kappa,\lhd)$.
	Put $\varphi:=\psi\circ\pi$.
	
	We shall now recursively construct a sequence $\langle T_\alpha\mid \alpha<\kappa\rangle$ of levels
	whose union will ultimately be the desired tree $T$.
	
	Let $T_0:=\{\emptyset\}$, and for all $\alpha<\kappa$, let $T_{\alpha+2}:=\{ t{}^\smallfrown\langle n\rangle\mid t\in T_{\alpha+1}, n\in \omega\}$.
	For $\alpha\in \acc(\kappa)$ and $t\in T_\alpha$, let $I_{t}:=\{n<\omega\mid \sup(t^{-1}(n))=\alpha\}$.
 	Set $T_{\alpha+1}:=\{ t{}^\smallfrown\langle\min(I_t)\rangle \mid t\in T_{\alpha}, I_t\neq \emptyset\}$.
	
	Next, suppose that $\alpha\in\acc(\kappa)$ and that $\langle T_\beta\mid \beta<\alpha\rangle$ has already been defined.
	We shall first identify some node $\mathbf b^\alpha:\alpha\rightarrow \omega$,
	and then let $$T_\alpha := \{ \eta*\mathbf b^\alpha\mid \eta\in\varrho, \dom(\eta)\s\alpha\}\setminus\{\emptyset\}.$$
	
	During the construction of $\mathbf b^\alpha$ we assure that:
	\begin{enumerate}
		\item $C_\alpha =(\mathbf b^\alpha)^{-1}(0)$;
		\item if $\beta\in \acc(C_\alpha)$, then $\mathbf b^\beta\subseteq \mathbf b^\alpha$.
	\end{enumerate}
	
	$\mathbf b^\alpha$ will be obtained as a limit of a sequence $b^\alpha\in\prod_{\beta\in C_\alpha}T_\beta$ that we define by recursion.
	Let $b^\alpha(0):=\emptyset$.
	Next, suppose $\beta^-<\beta$ are successive points of $C_\alpha$, and $b^\alpha(\beta^-)$ has already been defined.

	In order to decide $b^\alpha(\beta)$, we consider the following two possibilities:
	\begin{itemize}
	\item Assuming $\varphi(\beta)\in \varrho$ and the following set is non-empty: 
	$$Q^{\alpha, \beta} := \{ t\in T_\beta\cap T_{[b^\alpha(\beta^-)^\smallfrown\langle 0\rangle ,1]}\mid \exists s\in (T\restriction\beta)\setminus\Omega_{\beta}[  \varphi(\beta)*(b^\alpha(\beta^-)^\smallfrown\langle 0\rangle)\s s~\wedge \varphi(\beta)^-*s\s t]\}.$$
	Then let $t$ denote its $\lhd$-least element, and put $b^\alpha(\beta):=t$;
	
	\item Else, let $b^\alpha(\beta)$ be the $\lhd$-least element of $T_\beta\cap T_{[b^\alpha(\beta^-)^\smallfrown\langle 0\rangle,1]}$ that extends $b^\alpha(\beta^-)$.
	\end{itemize}
	
	Note that $Q^{\alpha,\beta}$ depends only on $T_\beta,\Omega_\beta,\varphi(\beta),C_\alpha\cap \beta^-$ and $b^\alpha(\beta^-)$,
	and hence for every ordinal $\gamma<\kappa$, if $C_\alpha\cap(\beta+1)=C_\gamma\cap(\beta+1)$, then $b^\alpha\restriction(\beta+1)=b^\gamma\restriction(\beta+1)$.
	It follows that for all $\beta \in \acc(C_\alpha)$ such that $b^\alpha\restriction\beta$ has already been defined,
	we may let $b^\alpha(\beta):=\bigcup\rng(b^\alpha\restriction\beta)$ and infer that $b^\alpha(\beta)=\mathbf b^\beta\in T_\beta$.
	This completes the definition of $b^\alpha$ and its limit $\mathbf b^\alpha=\bigcup\rng(b^\alpha)$.

	This completes the definition of the level $T_\alpha$.
	
	Clearly (1)--(2) holds from the construction.
	
	Having constructed all levels of the tree, we then let
	\[
	T := \bigcup_{\alpha < \kappa} T_\alpha.
	\]
	
	\begin{claim} 
		For every $\alpha<\kappa$ and every two nodes of $t,s\in T_\alpha$, there exists some $\eta\in \varrho$ such that $s=\eta*t$.
	\end{claim}
	\begin{proof} 
		First note that if $\alpha$ is finite, then the function $\eta:\alpha\rightarrow \omega$ defined by $\eta(n)=s(n)-t(n)$ for each $n<\alpha$ is a $\varrho$-modification such that $s=\eta*t$.
		Assuming $\alpha$ is infinite, let $\beta=\max(\acc(\alpha+1))$,	
		then $t\restriction \beta$ and $s\restriction \beta$ are in $T_\beta$, and by the definition of $T_\beta$, $t\restriction \beta$ is a $\varrho$-modification of $s\restriction \beta$.
		So $t$ is a $\varrho$-modification of $s$.
	\end{proof}
	
	\begin{claim}
		The tree $T\s{}^{<\kappa}\omega$ is a slim $\kappa$-tree  that is uniformly $\varrho$-coherent.
	\end{claim}
	\begin{proof}
		The fact that $T$ is a uniformly $\varrho$-coherent $\kappa$-tree follows from the construction. 
		So for each $\alpha < \kappa$,	$T_\alpha$ is a subset of $^\alpha\kappa$ of size $\le\max\{\aleph_0,|\alpha|\}$.
		Hence, $T$ is also a slim $\kappa$-tree.
	\end{proof}
	\begin{claim}
		$T$ is a $\kappa$-Aronszajn tree.
	\end{claim}
	\begin{proof}
		Assume on the contrary that there exists $d\in {}^{\kappa}\omega$ a branch through $T$.
		For every $\alpha\in \acc(\kappa)$, let $t_\alpha:=d\restriction \alpha$.
		Since $t_\alpha\in T_\alpha$, fix some $\eta_\alpha\in\varrho$ with $\dom(\eta_\alpha)\subseteq \alpha$ such that $t=\eta_\alpha* \mathbf b^\alpha$.
		By Fodor's lemma, there exist some stationary subset $R\subseteq\kappa$, an $\eta\in\varrho$ and $t\in T$ such that for every $\alpha\in R$:
		\begin{itemize}
		\item $\eta_\alpha= \eta$, and
		\item $\mathbf b^\alpha\restriction(\max(\dom(\eta))+1)=t$.
		\end{itemize}
		
		For every $\alpha<\beta$ from $R$, as $\eta*\mathbf b^\alpha=t_\alpha\subset t_\beta=\eta *\mathbf  b^\beta$, we get that 
		$\mathbf b^\alpha \subset \mathbf b^\beta$. Recalling that $C_\alpha =(\mathbf b^\alpha)^{-1}(0)$,
		this means that $\langle C_\alpha\mid\alpha\in R\rangle$ is thread through $\vec C$,
		so $D:=\bigcup_{\alpha\in R}C_\alpha$ contradicts Clause~(4) of Lemma~\ref{Lemma - The wanted sequence}.
	\end{proof}
	
	\begin{claim}\label{Claim - not kappa souslin}
		$T$ is not a $\kappa$-Souslin tree.
	\end{claim}
	\begin{proof} 	Recall that $R:=\{\alpha\in\acc(\kappa)\mid\otp(C_\alpha)=\omega\}$ is cofinal in $\kappa$.
		Let $A:=\{\mathbf b^\alpha\mid \alpha\in R\}$; we claim $A$ is an antichain of $T$ of size $\kappa$.
		Fix $\alpha<\beta$ in $R$, we show that $\mathbf b^\alpha$ and $\mathbf b^\beta$ are incomparable in $T$.
		On the contrary, assume $\mathbf b^\alpha\subseteq\mathbf b^\beta$.
		Recall that $C_\alpha=(\mathbf b^{\alpha})^{-1}(0)$ and the only extension of $ \mathbf b^\alpha$ in $T_{\alpha+1}$ is the function $\mathbf b^\alpha {}^\smallfrown \langle 0\rangle $.
		So $\omega<\otp(C_\alpha)+1\leq \otp(C_\beta) =\omega$ which is an absurd.
	\end{proof}

	\begin{claim}
		Every subtree of $T$ contains some frozen cone $T_{[s,i]}$ where $s\in T\restriction \nacc(\kappa)$ and $i<\omega$.
	\end{claim}
	\begin{proof}
		Let $A\subseteq T$ be a subtree which does not contain some frozen cone $T_{[s,i]}$ where $s\in T\restriction \nacc(\kappa)$ and $i<\omega$.
		We aim to get a contradiction, showing that $A$ is a subset of $T\restriction \alpha$ for some $\alpha<\kappa$.
		
		We define for every $i<\kappa$, the set $A_i$ to be all $\beta\in R_i$ such that $A\cap(T\restriction\beta)= \Omega_\beta$ is a subtree of $T\restriction\beta$ which does not contain the frozen cone $T_{[s,i]}$ whenever $s\in T\restriction \nacc(\beta)$ and $i<\omega$.
		By a proof similar to that of \cite[Claim~2.3.2]{paper22}, for every $i<\kappa$, the set $A_i$ is stationary.
		For every $i<\kappa$, we define $B_i:=A_i\cap \acc(\kappa)$, note that this set is a cofinal subset of $\acc(\kappa)$.
		
		For every $\alpha<\kappa$, we let $m_\alpha$ be the set of all of $\varrho$-modifiers with domain subset of $\alpha$.
		Thus, we apply clause~$(4)$ of Lemma \ref{Lemma - The wanted sequence} to the sequence $\langle B_i\mid i<\kappa\rangle$ of cofinal subsets of $\acc(\kappa)$, and the club $D:=\{ \alpha<\kappa\mid m_\alpha\s\psi[\alpha]\}$
		to obtain
		an ordinal $\alpha\in D$ such that for all $i<\alpha$:
	$$\sup (\nacc(C_\alpha) \cap B_i) = \alpha.$$
		
		We will prove that $A \subseteq T \restriction \alpha$; fix $y\in T_\alpha$, we will show that $y\notin A$.
		By the construction of the level $T_\alpha$, $y$ is a $\varrho$-modification of $ \mathbf b^\alpha$, i.e. for some $\varrho$-modifier $\eta$ where $\dom(\eta)\subseteq\alpha$, $y=\eta*\mathbf b^\alpha$.
		As $\alpha\in D$ and $\eta \in m_\alpha$, we can fix $i<\alpha$ such that $\psi(i)=\eta$.
		
		Fix $\beta \in \nacc(C_\alpha) \cap B_i$. 
	    Clearly, $\varphi(\beta)=\psi(\pi(\beta))=\psi(i)=\eta$.
		Since $\beta \in B_i$,
		we know that $\Omega_\beta = A \cap (T \restriction \beta)$ is a subtree of $T \restriction \beta$ which does not contain the frozen cone $T_{[s,i]}$ whenever $s\in T\restriction \nacc(\kappa)$ and $i<\omega$.
		Let $N:=\max\{|n|+1\mid n\in \rng(\varphi(\beta))\}$.
		
		We claim that $Q^{\alpha, \beta}\neq\emptyset$.
		As $\Omega_\beta$ is a subtree of $T \restriction \beta$ which does not contain the frozen cone $T_{[s,i]}$ whenever $s\in T\restriction \nacc(\kappa)$ and $i<\omega$.
		we get that $T_{[\eta * \mathbf (b^\alpha(\beta^-)^\smallfrown\langle 0\rangle),N]}\not\subseteq \Omega_\beta$. 
		So we can find some $s\in T\restriction \beta$ an $N$-extension of $\eta * \mathbf (b^\alpha(\beta^-)^\smallfrown\langle 0\rangle )$ in $T\restriction \beta$ not in $\Omega_\beta$.
		This implies that $\eta^-*s$ is a well defined function in $T$.
		So $Q^{\alpha,\beta}\neq \emptyset$.
		
		Let $t:=\min(Q^{\alpha,\beta},\lhd)$ and  $\beta^-:=\sup(C_\alpha\cap\beta)$.
		So there exists some $s\in T\restriction \beta \setminus 	\Omega_{\beta}$ such that $\varphi(\beta)*(\mathbf b^\alpha(\beta^-)^\smallfrown\langle 0\rangle)\s s$ and $\varphi(\beta)^-* s\s t$.
		In particular, $\eta*(\mathbf b^\alpha(\beta)^\smallfrown\langle 0\rangle)$ extends an element not in $\Omega_\beta$.
		
		Altogether, as $A$ is downward closed, $s\notin A$ and $ s\subseteq \eta*\mathbf b^\alpha(\beta)\subseteq y$, we get that $y\notin A$.
	\end{proof}\qedhere
\end{proof}

\begin{cor}
	Assume $V=L$ and $\kappa$ is a regular uncountable cardinal that is not weakly compact.
Then there exists a minimal linear order with respect to being non-$\sigma$-scattered.
\end{cor}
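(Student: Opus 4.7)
The plan is to observe that this corollary is an immediate consequence of the Main Theorem together with Fact~\ref{11022}. Since the only hypothesis we are given is $V=L$ together with $\kappa$ being a regular uncountable cardinal that is not weakly compact, the first step is to invoke the second bullet of Fact~\ref{11022} with $\xi:=\kappa$; this yields that $\p_\kappa(\kappa,2,{\sq},\kappa)$ holds, which by the convention established just before Fact~\ref{11022} is precisely the principle $\p(\kappa,2,{\sq},\kappa)$ appearing (with subscript omitted) in the hypotheses of the Main Theorem.

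With the proxy principle in hand, I would next apply the Main Theorem directly with this choice of $\xi=\kappa$. The conclusion is that $\mathfrak M_\kappa$, the class of non-$\sigma$-scattered linear orders of size $\kappa$, contains $2^\kappa$-many pairwise non-embeddable elements that are minimal with respect to being non-$\sigma$-scattered. Since $2^\kappa\geq 1$, in particular at least one such minimal linear order exists, which is exactly the assertion of the corollary.

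I expect no real obstacle here: the corollary is a direct specialization, and the only delicate point is to correctly match the instance $\p_\xi(\kappa,2,{\sq},\kappa)$ supplied by Fact~\ref{11022} (with $\xi=\kappa$) to the hypothesis of the Main Theorem. Note that because we take $\xi=\kappa$ rather than $\xi<\kappa$, the Main Theorem does not guarantee that the resulting minimal orders are $\kappa$-Countryman lines, but this is immaterial for the statement of the corollary, which only asserts the existence of a single minimal non-$\sigma$-scattered linear order.
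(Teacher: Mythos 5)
Your proposal is correct: the corollary does follow by combining the second bullet of Fact~\ref{11022} (with $\xi=\kappa$) with a result guaranteeing the existence of at least one minimal non-$\sigma$-scattered order of size $\kappa$, and your remark that the Countryman conclusion is lost for $\xi=\kappa$ but irrelevant here is accurate. The only difference from the paper is in which intermediate result is cited: you invoke the full Main Theorem, whose proof of the ``$2^\kappa$-many pairwise non-embeddable'' clause rests on the heavier machinery of Section~5 (Theorem~\ref{Proposition - many linear orders}, the matrix construction producing a large family of trees), whereas the paper proves the corollary at the end of Section~4 by citing only Proposition~\ref{Proposition - the reduction} together with the single-tree construction of Theorem~\ref{Proposition - the construction}; since that tree is not $\kappa$-Souslin it has an antichain of size $\kappa$, and the reduction then yields one minimal order. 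Both routes are valid, but the paper's is self-contained at that point in the text and avoids appealing to the stronger family-of-orders result when only existence is needed.
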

\begin{proof} By Proposition~\ref{Proposition - the reduction}, Fact~\ref{11022} and Theorem~\ref{Proposition - the construction}.
\end{proof}

\subsection{Assuring $T\restriction \nacc(\kappa)$ is special}\

In this subsection we give a sufficient condition on the sequence $\vec C$ to assure that the tree constructed in the proof of Theorem~\ref{Proposition - the construction}  is such that $T\restriction \nacc(\kappa)$ is special.
It is left open whether it is consistent for an inaccessible cardinal $\kappa$ to admit such a $C$-sequence.

\begin{lemma} Suppose:
\begin{enumerate}
\item $\diamondsuit(\kappa)$ holds;
\item $\vec C=\langle C_\alpha\mid \alpha<\kappa\rangle$ is a witness for $\p^-(\kappa,2,{\sq},\kappa)$;
\item there exist a club $C\s\kappa$ and a regressive map $g: C\rightarrow \kappa$ 
satisfying that for every $\nu<\kappa$ there exists a cardinal $\mu_\nu<\kappa$ and a map $f_{\nu}:g^{-1}(\nu)\rightarrow \mu_\nu$ such that for every $i<\mu_\nu$, the set $\{ C_\alpha \mid \alpha\in (f_{\nu})^{-1}(i)\}$ is an $\sqsubseteq$-antichain.
\end{enumerate}

	Then the tree $T$ produced by Theorem~\ref{Proposition - the construction} satisfies that $T\restriction \nacc(\kappa)$ is special, i.e there exists a function $G:T\restriction \nacc(\kappa)\rightarrow \nacc(\kappa)$ such that for every $t\in T\restriction (\nacc(\kappa)\setminus 1)$, we have $G(t)<\dom(t)$ and $G^{-1}(\nu)$ is a union of ${<}\kappa$ many $<_T$-antichains for each $\nu\in \nacc(\kappa)$.
\end{lemma}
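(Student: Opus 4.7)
The plan is to define $G$ by reading off, at each node $t\in T_{\alpha+1}$, both the regressive color $g(\alpha)$ (or its substitute for $\alpha\notin C$) and the canonical $\varrho$-modifier $\eta_t$ satisfying $t\restriction\alpha=\eta_t*\mathbf{b}^\alpha$. The combinatorial backbone is a lifting observation: a $\sq$-antichain of $C_\alpha$'s becomes a $<_T$-antichain in $T$. Indeed, the dependency analysis inside the proof of Theorem~\ref{Proposition - the construction}---the value $b^\alpha(\gamma)$ is determined by $C_\alpha\cap(\gamma+1)$ alone---combined with $C_\gamma=(\mathbf{b}^\gamma)^{-1}(0)$ gives $\mathbf{b}^\alpha\subseteq\mathbf{b}^\beta$ iff $C_\alpha\sq C_\beta$; since for any fixed $\eta_0\in\varrho$ the map $x\mapsto\eta_0*x$ is injective, the same equivalence carries over to $(\eta_0*\mathbf{b}^\alpha)\subseteq(\eta_0*\mathbf{b}^\beta)$ whenever $\dom(\eta_0)\s\alpha$.

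First I will shrink $C$ to $C\cap\acc(\kappa)$, and, using that $\acc(\kappa)\setminus C$ is nonstationary, define an auxiliary regressive map $h\colon\acc(\kappa)\setminus C\to\kappa$ by $h(\alpha):=\sup(C\cap\alpha)$; by closedness of $C$ each fiber $h^{-1}(\gamma)$ is bounded in $\kappa$. Next I fix a coding of $\nacc(\kappa)$ as a disjoint union of three cofinal pieces $N_0,N_1,N_2$, together with identifications $N_j\leftrightarrow\kappa\times\kappa\times\varrho$ for $j=1,2$, arranged through a G\"odel-style pairing so that the code of a triple with each coordinate below $\alpha$ lies strictly below $\alpha+1$ for all but boundedly many $\alpha$. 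Then for $t\in T$ with $\dom(t)=\alpha+1\ge 1$, I set $G(t):=\alpha$ if $\alpha\in\nacc(\kappa)$; $G(t):=$ the $N_1$-code of $(g(\alpha),\max(\dom(\eta_t)),\eta_t)$ if $\alpha\in C$; and $G(t):=$ the $N_2$-code of $(h(\alpha),\max(\dom(\eta_t)),\eta_t)$ if $\alpha\in\acc(\kappa)\setminus C$; with all problematic low-level nodes absorbed into $G^{-1}(0)\subseteq N_0$, which by slimness still has size less than $\kappa$.

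Finally I verify that each $G^{-1}(\nu)$ decomposes into fewer than $\kappa$ $<_T$-antichains. If $\nu=\alpha\in\nacc(\kappa)$, then $G^{-1}(\nu)\subseteq T_{\alpha+1}$ has cardinality below $\kappa$ by slimness of $T$, so it is a union of $<\kappa$ singleton antichains. If $\nu\in N_1$ codes $(\nu_0,\delta_0,\eta_0)$, then $G^{-1}(\nu)=\{(\eta_0*\mathbf{b}^\alpha)^\smallfrown\langle\min(I_{\eta_0*\mathbf{b}^\alpha})\rangle\mid\alpha\in g^{-1}(\nu_0)\cap C,\ \dom(\eta_0)\s\alpha\}$; partitioning via $f_{\nu_0}$ into $\mu_{\nu_0}<\kappa$ fibers and invoking the lifting observation yields $\mu_{\nu_0}$ antichains. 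If $\nu\in N_2$ codes $(\gamma_0,\delta_0,\eta_0)$, then $G^{-1}(\nu)\subseteq\bigcup_{\alpha\in h^{-1}(\gamma_0)}T_{\alpha+1}$ has cardinality below $\kappa$ by regularity of $\kappa$, slimness of $T$, and boundedness of $h^{-1}(\gamma_0)$, so again a union of $<\kappa$ singletons.

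The main obstacle I anticipate is the technical bookkeeping of the coding in the second step: arranging simultaneously that $G(t)\in\nacc(\kappa)$, that $G(t)<\dom(t)$ holds uniformly at every level, and that the decomposition of each preimage remains parameterised by fewer than $\kappa$ data. I expect this to be a matter of careful but routine ordinal arithmetic; once in place, hypothesis~(3) combined with the invertibility of the $\varrho$-action and the uniformly $\varrho$-coherent structure of $T$ immediately deliver the specialization.
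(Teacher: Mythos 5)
Your overall strategy is the same as the paper's: at ``good'' successor levels $\alpha+1$ you code the pair $(g(\alpha),\eta_t)$ regressively into $\nacc(\kappa)$, you dispose of the remaining levels by a boundedness argument, and the heart of the matter is the lifting observation that $\eta_0*(\mathbf b^{\alpha}{}^\smallfrown\langle 0\rangle)\subseteq\eta_0*(\mathbf b^{\beta}{}^\smallfrown\langle 0\rangle)$ forces $C_\alpha\sq C_\beta$ (via $C_\alpha=(\mathbf b^\alpha)^{-1}(0)$ and invertibility of the $\varrho$-action), so that hypothesis~(3) splits each fiber into $\mu_{\nu_0}$ antichains. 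That core argument is correct and is exactly the paper's.

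The gap is in your coding step. No pairing of $\kappa\times\kappa\times\varrho$ into a cofinal subset of $\nacc(\kappa)$ can send every triple with coordinates below $\alpha$ strictly below $\alpha+1$ for \emph{all but boundedly many} $\alpha$: at a successor stage $\alpha\to\alpha+1$ there are $|\alpha|$-many new triples but only finitely many new target ordinals, so the closure points of any such pairing form a club, not a co-bounded set. Consequently the set of ``problematic'' levels is unbounded, and your proposed patch --- absorbing all problematic nodes into $G^{-1}(0)$ --- fails: that fiber would meet unboundedly many levels of $T$ and there is no reason for it to be a union of fewer than $\kappa$ many $<_T$-antichains (that is precisely the property one is trying to establish). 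The same defect recurs in your $N_2$-clause, where you needlessly include $\eta_t$ in the code of nodes at levels $\alpha\in\acc(\kappa)\setminus C$ and again need the code to land below $\alpha+1$. The repair is exactly the paper's move: replace $C$ by a club $D\subseteq C\cap\acc(\kappa)$ of closure points of the pairing, apply the fine $(g(\alpha),\eta_t)$-coding only at levels $\alpha+1$ with $\alpha\in D$, and send every other node $t$ to $\max(D\cap\dom(t))+1$ (no $\varrho$-data needed there), whose fibers are confined to a bounded block of levels and hence have size less than $\kappa$. With that change --- and noting that your first clause $G(t):=\alpha$ can collide with $N_1$-codes, which is harmless but should be acknowledged --- your argument goes through and coincides with the paper's.
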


\begin{proof}
	Let us fix a bijection $\pi:[\kappa\times \mathbb Z]^{<\omega}\times \kappa \rightarrow \kappa$ and a club $D\subseteq C\cap \acc(\kappa)$ such that for every $\alpha\in D$ we have $\pi[[\alpha\times \mathbb Z]^{<\omega}\times \alpha]\subseteq \alpha$.
	Let $A:=\{\alpha+1\mid \alpha\in D\}$.
	
	Next, we define a map $G:T\restriction \nacc(\kappa)\rightarrow \nacc(\kappa)$ as follows:
	given $t\in T\restriction \nacc(\kappa)$ we split to cases:
	\begin{itemize}
		\item If $\dom(t)\in \nacc(\kappa )\cap\min(D)$, let $G(t)=0$.
		\item If $\dom(t)\in \nacc(\kappa)\setminus A$, let $G(t):=\max(D\cap \dom(t))+1$.
		Note that $G(t)<\dom(t)$ and $G(t)\in A$.
		
		\item If $\dom(t)\in A$, fix $\alpha\in D$ and $\eta_t$ a $\varrho$-modification with domain $\subseteq\alpha$ such that $t=\eta_t*(b^\alpha{}^\smallfrown \langle 0\rangle)$.
		Note that $\eta_t \in [\alpha\times \mathbb Z]^{<\omega}$ and $g(\alpha)<\alpha$.
		Let $G(t):=\pi(\eta_t,g(\alpha))+2$, clearly $0<G(t)<\alpha<\dom(t)$ and $G(t)\notin A$.
	\end{itemize}
	Fix some $\nu\in \rng(G)$, we claim that $G^{-1}(\nu)$ is a union of ${<\kappa}$ many $<_T$-antichains.
	We split to cases:
	
	$\br$ If $\nu=0$ or $\nu\in A$.
	Then every $t\in G^{-1}(\nu)$ is such that $\dom(t)\in \nacc(\kappa)\setminus A$ and $\dom(t)<\min(D\setminus \nu)$.
	As $T$ is a $\kappa$-tree, we get that $|G^{-1}(\nu)|<\kappa$, hence $G^{-1}(\nu)$ is a union of $<\kappa$ many singletons, i.e. $<_T$-antichains.
	
	$\br$ If $0\neq \nu \notin A$.
	Then every $t\in G^{-1}(\nu)$ is such that $\dom(t)\in A$, i.e. for some $\alpha_t\in D$ and $\eta_{t}$ we have $t=\eta_t *(b^\alpha{}^\frown\langle 0\rangle)$.
	As $\pi$ is a bijection, we can fix $\eta\in \varrho$ and $\beta<\kappa$ such that $\nu=\pi(\eta,\beta)+2$ and for every $t\in G^{-1}(\nu)$, we have that $g(\alpha_t)=\beta$ and $\eta_t=\eta$.
	We claim that for every $i<\mu_\beta$ the set $\{t\in G^{-1}(\nu)\mid f_{\beta}(\alpha_t)=i\} $ is $<_T$-antichain.
	Suppose on the contrary that $t,s\in G^{-1}(\nu)$, $f_{\beta}(\alpha_t)=f_{\beta}(\alpha_s)=i$ and $t\subseteq s$.
	Then as $\eta_t=\eta_s=\eta$, we get that $b^{\alpha_t}{}^\frown\langle 0\rangle\subseteq b^{\alpha_s}{}^\frown\langle 0\rangle$.
	Hence $C_{\alpha_t}\sqsubseteq C_{\alpha_s}$ which is absurd as $\alpha_t,\alpha_s\in f_{\beta}^{-1}(i)$.
\end{proof}
\section{A large antichain}

In this section we construct $2^\kappa$-many linear orders which are minimal with respect to being non-$\sigma$-scattered and every two of them do not embed into each other.

As can be seen in the surveys of \cite{MR0776625}, \cite{MR2406209}, there is a fundamental connection between the classes of Aronszajn lines and Aronszajn trees.
To give a broader context to this section, we mention some related results.
In \cite{MR0788070}, the authors proved that assuming $2^{\aleph_0}<2^{\aleph_1}$ there is a family of $2^{\aleph_1}$-many $\aleph_1$-Aronszajn trees each one is not club-embeddable into the other and that under the \emph{Proper Forcing Axiom} all $\aleph_1$-Aronszajn trees are club-isomorphic.
Later, in \cite{Lipschitz_maps_on_trees} Todor\v{c}evi\'{c} showed an analysis under the \emph{Proper Forcing Axiom} of the behavior of the class of $\aleph_1$-Aronszajn trees and the importance of the sub-class of coherent $\aleph_1$-Aronszajn trees.
Finally, we mention that in \cite[Theorem~3.3.4]{martinez2011contributions}, using the tree $T(\rho_0)$, it was shown that $2^{\aleph_0}<2^{\aleph_1}$ implies there exists a family of $2^{\aleph_1}$ pairwise incomparable $\aleph_1$-Countryman lines.

Assuming $T$ is a Hausdorff tree and $s,t\in T$, we say that $r$ is the meet of them if it is the maximal element in the tree which is below both $s$ and $t$.
The following Lemma can be extracted from the proof of \cite[Fact~2]{MR3633794}.

\begin{lemma}\label{Lemma - Aron incom}
	Suppose $\mathbf T:=(T,<_T,<_{\lex})$ is a lexicographically ordered Hausdorff $\kappa$-Aronszajn tree and $N$ is an elementary submodel such that $T\in N$ and $N\cap\kappa\in\kappa$.
	Given $s,t\in T_{N\cap \kappa}$ such that $s<_{\lex} t$, there exists some $r\in T\restriction (N\cap \kappa)$ such that $r$ is $<_T$-incomparable with both $s$ and $t$, $r$ is $<_T$-above the meet of $s$ and $t$,
	and $s<_{\lex} r <_{\lex} t$.
\end{lemma}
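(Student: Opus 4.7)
The plan is to work with the $<_T$-meet $m$ of $s$ and $t$ and locate $r$ at an intermediate tree level. Set $\delta:=N\cap\kappa$ and $\alpha_0:=\h(m)$; since $s\ne t$ both have height $\delta$, we have $\alpha_0<\delta$, whence $\alpha_0\in N$ because $N\cap\kappa=\delta$. By the Hausdorff property, I may fix the unique immediate successors $s_0,t_0\in T_{\alpha_0+1}$ of $m$ with $s_0\leq_T s$ and $t_0\leq_T t$; the assumption $s<_{\lex} t$ forces $s_0<_{\lex} t_0$.

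The key reduction I would make is: it suffices to produce some $\beta\in(\alpha_0,\delta)$ and $r\in T_\beta\setminus\{s\restriction\beta,\,t\restriction\beta\}$ strictly between $s\restriction\beta$ and $t\restriction\beta$ in $<_{\lex}$. Such an $r$ automatically fulfils all the required properties: it lies in $T\restriction\delta$; it is $<_T$-incomparable with both $s$ and $t$ because $\h(r)=\beta<\delta$ while $r$ differs from the respective predecessors $s\restriction\beta$ and $t\restriction\beta$; a short analysis of meets in the lex order on a Hausdorff tree shows that the predecessor of $r$ at level $\alpha_0+1$ must lie in the closed lex-interval $[s_0,t_0]$, which forces $r>_T m$; and the same analysis yields $s<_{\lex} r<_{\lex} t$.

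Assume now, toward contradiction, that no such $r$ exists. Then the open lex-interval $(s\restriction\beta,\,t\restriction\beta)$ in $T_\beta$ is empty for every $\beta\in(\alpha_0,\delta)$. Iterating along $s$, this forces $s\restriction(\gamma+1)$ to be the $<_{\lex}$-greatest immediate successor of $s\restriction\gamma$ in $T_{\gamma+1}$ for every $\gamma\in[\alpha_0+1,\delta)$. I would then introduce, for each $x\in T_{\alpha_0+1}$, the \emph{rightmost-extension} recursion starting from $x$: at successor stages pick the $<_{\lex}$-greatest immediate successor (if one exists), and at limits pick the unique $<_T$-upper bound in $T$ of the chain constructed so far (if one exists). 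Let $A(x)$ denote the initial segment of $[\alpha_0+1,\kappa)$ on which this recursion succeeds; the discussion above shows $A(s_0)\supseteq(\alpha_0,\delta)$.

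The finale combines the Aronszajn property with elementarity. The rightmost-extension sequence from $x$ forms a $<_T$-chain in $T$, so by the absence of $\kappa$-branches we have $\sup A(x)<\kappa$ for every $x\in T_{\alpha_0+1}$; since $\kappa$ is regular and $|T_{\alpha_0+1}|<\kappa$, the ordinal $\gamma^*:=\sup\{\sup A(x)\mid x\in T_{\alpha_0+1}\}$ satisfies $\gamma^*<\kappa$. As $\gamma^*$ is definable from $T$ and $\alpha_0\in N$, elementarity gives $\gamma^*\in N\cap\kappa=\delta$, whence $\gamma^*<\delta$; but $\sup A(s_0)\geq\delta>\gamma^*$ contradicts the definition of $\gamma^*$. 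The main subtlety will be verifying the automatic properties of the candidate $r$ in the reduction --- essentially the observation that a node lex-trapped between two branches of a Hausdorff tree must branch off strictly above their common meet.
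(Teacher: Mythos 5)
Your argument is correct, and it shares the paper's core observation --- under the contradiction hypothesis, $s$ is forced to be lex-rightmost above $s_0$ at every level below $\delta=N\cap\kappa$, and this rigidity must propagate past $\delta$ --- but the implementation is genuinely different. The paper works with the \emph{global} rightmost node $z_\alpha:=\max_{<_{\lex}}\{q\in T_\alpha\mid s_0<_Tq\}$ at each level and needs two separate reflection arguments: one to show each $z_\alpha$ exists (a counterexample level would reflect into $N$, and any $q>_{\lex}s_\alpha$ in $X_\alpha$ is a forbidden $r$), and one to show the $z_\alpha$'s are pairwise comparable; only at the very end does the resulting $\kappa$-branch contradict the Aronszajn property. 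You instead run a \emph{greedy} rightmost recursion from each node of $T_{\alpha_0+1}$, which is a chain by construction but may die out; you invoke the Aronszajn property early to bound each such chain, package the bounds into a single ordinal $\gamma^*<\kappa$, and reflect that one ordinal into $N$ to contradict the survival of the recursion from $s_0$ up to $\delta$. The trade is roughly even: you avoid the paper's comparability claim (your chains are chains by fiat) at the cost of bookkeeping a partial recursion, and you use elementarity only once, on an ordinal rather than on a statement about levels of $T$. When writing it up, make explicit that at limit stages you take the (by Hausdorffness unique) upper bound \emph{at that level}, and note that $\delta$ is a limit ordinal, so the level $\gamma+1$ appearing in your ``iterating along $s$'' step stays below $\delta$.
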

\begin{proof}			
	We assume on the contrary that no such $r$ can be found.
	Let $\delta:=N\cap \kappa$ and $m$ be the meet of $s$ and $t$ in the tree.
	Let $\Delta:=\h(m)+1$.
	For each $\alpha<\delta$, let $s_\alpha$ denote the unique element in $s_\downarrow \cap T_\alpha$.
	Let $a:=s_{\Delta}$.
	As $T$ is Hausdorff, $\Delta<\delta$.
	Clearly, $D:=\{\alpha<\kappa\mid |T\restriction \alpha|=|\alpha|\}$ is a club in $\kappa$ definable in $N$, which implies $\delta\in D$.
	So for every $\alpha<\delta$ we have $|T_\alpha|\leq|\delta|$, hence $T_\alpha\subseteq N$.
	This proves that $s_\alpha\in N$ for every $\alpha<\delta$.
	
	For each $\alpha<\kappa$ above $\Delta$, we define the set $X_\alpha:=\{q\in T_\alpha\mid a<_T q\}$.
	\begin{claim}
		For every $\alpha<\kappa$ above $\Delta$, the set $X_\alpha$ has a $<_{\lex}$-maximal element.
		Moreover, if $\alpha<\delta$, then $s_\alpha=\max(X_\alpha,<_{\lex})$.
	\end{claim}
	\begin{proof}
		Assume on the contrary that for some $\alpha<\kappa$ above $\Delta$ we have that the set $X_\alpha$ has no $<_{\lex}$-maximal element.
		As $N$ is an elementary submodel, we can find such an $\alpha$ in $N$, i.e $\Delta<\alpha<\delta$.
		Note that $s_\alpha\in X_\alpha\cap N$, therefore we can fix some $q\in X_\alpha$ which is $<_{\lex}$ above $s_\alpha$.
		As $a<_T q$, we get that $q$ is $<_T$-incomparable with $t$ and since $a<_{\lex} t$ we get that $q<_{\lex} t$.
		We know that $q$ is $<_T$-incomparable with $s$, as $q\neq s_\alpha$ and $q\in T_\alpha$.
		Note that $q$ is above the meet of $s$ and $t$ and $s<_{\lex} q$.
		Finally, this is contradiction to the assumption on $r$, which is absurd. \qedhere
	\end{proof}
	
	Next, for each $\alpha<\kappa$ above $\Delta$, let $z_\alpha:=\max(X_\alpha,<_{\lex})$.
	The next claim is a contradiction to the assumption $T$ is $\kappa$-Aronszajn tree.
	\begin{claim}
		The set $\{ z_\alpha\mid \Delta<\alpha<\kappa\}$ is a branch in $T$ of size $\kappa$.
	\end{claim}
	\begin{proof}
		Suppose on the contrary that $\{z_\alpha\mid \Delta<\alpha<\kappa\}$ is a not $<_T$-linearly ordered.
		As $N$ is an elementary submodel, we can find $\alpha<\beta<\delta$ above $\Delta$ such that $z_\alpha$ and $z_\beta$ are $<_T$-incomparable.
		Since $s_\alpha\in X_\alpha\cap N$ and by previous Claim, no element in $X_\alpha$ is $<_{\lex}$ above $s_\alpha$, so $z_\alpha=s_\alpha$.
		Similarly, we get that $z_\beta=s_\beta$.
		But this is absurd as $s_\alpha<_T s_\beta$.
	\end{proof}
\end{proof}

\begin{lemma}[{\cite[Proposition 2.4]{yinhe2013characterization}}]\label{Lemma - club iso}
	If $(S,<_S, <_{lS})$, $(T,<_T,<_{lT})$ are two lexicographically ordered $\kappa$-Aronszajn trees, $X$ and $Y$ are subsets of $S$ and $T$ respectively, both of size $\kappa$ and $\pi:(X,<_{lS})\rightarrow (Y,<_{lT})$ is an order isomorphism, then there exists a club $C$ such that $((X_\downarrow)\restriction C,<_S,<_{lS})$ is tree isomorphic and order isomorphic to $((Y_\downarrow)\restriction C,<_T,<_{lT})$. 
	Moreover, the witness map $f: (X_\downarrow )\restriction C\rightarrow (Y_\downarrow)\restriction C$ is such that $f$ agrees with $\pi$, i.e., $\pi"(s^\uparrow \cap X)=f(x)^\uparrow\cap Y$ for any $x\in (X_\downarrow) \restriction C$.
\end{lemma}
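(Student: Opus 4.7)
The plan is an elementary submodel argument that extracts a club $C$ on which $\pi$ must already respect the tree structure, with the main tool being Lemma~\ref{Lemma - Aron incom} applied to the subtree $Y_\downarrow$.

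First I would fix a sufficiently large regular $\theta$, a well-order $\lhd$ of $H_\theta$, and a continuous $\in$-chain $\langle M_\alpha \mid \alpha<\kappa\rangle$ of elementary submodels of $(H_\theta,\in,\lhd)$, each of size $<\kappa$, with $\alpha\subseteq M_\alpha$ and $\{S,T,X,Y,<_S,<_T,<_{lS},<_{lT},\pi\}\in M_0$. Because $S$ and $T$ are $\kappa$-Aronszajn trees and $|X|=|Y|=\kappa$, the club $\{\delta<\kappa \mid \exists\alpha\,(M_\alpha\cap\kappa=\delta)\}$ may be thinned to a club $C$ on which $(S\cup T)\restriction\delta\subseteq M_{\alpha_\delta}$ and $\pi$ restricts to a bijection between $X\cap(S\restriction\delta)$ and $Y\cap(T\restriction\delta)$, where $\alpha_\delta$ denotes the index with $M_{\alpha_\delta}\cap\kappa=\delta$.

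For $\delta\in C$ and $s\in X_\downarrow\cap S_\delta$ I plan to define $f(s):=\pi(x)\restriction\delta$ for any chosen $x\in X\cap s^\uparrow$ (such $x$ exists by definition of $X_\downarrow$, and the bijection property forces $\h(\pi(x))\geq\delta$). The main obstacle is well-definedness: that this choice does not depend on $x$. Suppose for contradiction that $x_1,x_2\in X\cap s^\uparrow$ yielded $t_i:=\pi(x_i)\restriction\delta$ with $t_1<_{lT}t_2$, and let $r_0$ be the tree-meet of $t_1,t_2$. Noting that $Y_\downarrow$ is itself a Hausdorff $\kappa$-Aronszajn tree (the levels are nonempty because $|Y|=\kappa$), apply Lemma~\ref{Lemma - Aron incom} to $Y_\downarrow$ with submodel $M_{\alpha_\delta}$ and nodes $t_1,t_2$ to obtain $r\in Y_\downarrow\restriction\delta$ that is $<_T$-above $r_0$, $<_T$-incomparable with $t_1$ and $t_2$, and satisfies $t_1<_{lT}r<_{lT}t_2$. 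Crucially $r\in M_{\alpha_\delta}$ and $r^\uparrow\cap Y\neq\emptyset$.

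A routine tree-vs-lex calculation shows that every $v\in r^\uparrow\cap T$ satisfies $\pi(x_1)<_{lT}v<_{lT}\pi(x_2)$, whence $A:=\pi^{-1}[r^\uparrow\cap Y]$ is a nonempty subset of $X\cap(x_1,x_2)_{lS}$, which in turn is contained in $s^\uparrow$. Since $A$ is definable from $\pi,r,Y\in M_{\alpha_\delta}$, we have $A\in M_{\alpha_\delta}$, and hence the tree-meet $u$ of $A$ in $S$ (which exists since $A\subseteq s^\uparrow$) also lies in $M_{\alpha_\delta}$, forcing $\h(u)<\delta$. But $A\subseteq s^\uparrow$ makes $s$ a common prefix of $A$, so $s\leq_S u$ and $\h(u)\geq\delta$, a contradiction. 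Once well-definedness is in hand, $f$ is manifestly $<_T$- and $<_{lex}$-preserving, and running the symmetric construction on $\pi^{-1}$ provides an inverse of $f$, giving the desired tree isomorphism on $(X_\downarrow)\restriction C$ and $(Y_\downarrow)\restriction C$. The ``moreover'' assertion $\pi[s^\uparrow\cap X]=f(s)^\uparrow\cap Y$ then follows: $\subseteq$ is the definition, and $\supseteq$ comes from the inverse producing $\pi^{-1}(y)\restriction\delta=s$ whenever $f(s)\leq_T y\in Y$.
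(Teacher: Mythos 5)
Your proposal is correct and follows essentially the same route as the paper's proof: an elementary submodel chain yielding the club $C=\{M_\alpha\cap\kappa\}$, the definition $f(s)=\pi(x)\restriction\delta$ for $x\in s^\uparrow\cap X$, and an appeal to Lemma~\ref{Lemma - Aron incom} inside the relevant submodel to rule out two extensions of $s$ whose $\pi$-images split below level $\delta$ (the paper picks a single witness in the model by elementarity where you take the meet of $\pi^{-1}[r^\uparrow\cap Y]$, but the contradiction is the same). The only places you are terser than the paper are the routine verifications that $f$ preserves incomparability and the lexicographic order, which go through exactly as you indicate.
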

\begin{proof}
	Without loss of generality we may assume that $S=X_\downarrow $ and $T=Y_\downarrow$ and $\pi$ is a surjective map.
	Let us fix $\langle N_\alpha\mid \alpha<\kappa\rangle$ a continuous elementary $\in$-chain where $N_0$ contains all relevant objects, $|N_\alpha|<\kappa$ for each $\alpha<\kappa$.
	Notice the set $C:=\{\alpha<\kappa\mid \kappa\cap N_\alpha=\alpha\}$ is a club subset of $\kappa$.
	
	Note that given $t\in T$ and $s\in S$ the sets $s^{\uparrow}\cap X$ and $t^{\uparrow}\cap Y$ are convex sets in $(X,<_{lS})$ and $(Y,<_{lT})$ respectively.
	We define a map $f:S\restriction C\rightarrow T\restriction C$ as follows: for any $\alpha\in C$ and $s\in S_\alpha$, $f(s)$ is the unique $t\in T_\alpha$ such that $t^{\uparrow}\cap Y=\pi"(s^{\uparrow}\cap X)$.
	The following claim shows that the map $f$ is well-defined.
	
	\begin{claim}
		For any $\alpha\in C$ and $s\in S_\alpha$ there exists $t\in T_\alpha$ such that $t^{\uparrow}\cap Y=\pi"(s^\uparrow \cap X)$.
	\end{claim}
	\begin{proof}
		Note that $\alpha=N_\alpha\cap \kappa$ and the functions $\pi,\h_S$ and $\h_T$ are definable in $N_\alpha$.
		Fix some $x\in X$ such that $s<_S x$.
		Clearly $s,\pi(x)\notin N_\alpha$ and $\h_T(\pi(x))\geq \alpha$.
		Let $t$ be the unique element in the level $T_\alpha$ that is either $\pi(x)$ or $<_T$-below $\pi(x)$.
		We will show that $t^{\uparrow}\cap Y=\pi"(s^\uparrow \cap X)$.
		 
		We begin by showing that $\pi"(s^\uparrow \cap X)\subseteq t^{\uparrow}\cap Y$.
		Suppose otherwise and fix some $x'\in s^\uparrow \cap X$ such that $\pi(x')\notin t^{\uparrow}$, without loss of generality we assume that $x<_{lS} x'$.
		As $s<_S x'$, we get that $x'\notin N_\alpha$, so $\pi(x')\notin N_\alpha$ which implies $\h_T(\pi(x'))\geq \alpha$.
		Let $t'\in T_\alpha$ be the unique element equal to $\pi(x')$ or $<_T$-below $\pi(x')$.
		Since $t,t'\in T_\alpha$ and $t\neq t'$, we know that $\Delta_T(t, t')<\alpha$.
		As $x<_{lS} x'$ and $\pi$ is order-preserving, we get that $\pi(x) <_{lT} \pi(x')$, hence $t<_{lT} t'$.
		
		By Lemma~\ref{Lemma - Aron incom} there exists a $t''\in N_\alpha\cap T$ such that $t, t',t''$ are pairwise incomparable and $t<_{lT} t''<_{lT} t'$.
		Now by elementarity of $N_\alpha$, pick $y''\in Y\cap (t'')^{\uparrow }\cap N_\alpha$ and let $x'':=\pi^{-1}(y'')$, thus $\h_S(x'') <\alpha$.
		It follows that $t<_{lT} y'' <_{lT} t'$, which implies $y<_{lT} y'' <_{lT} y'$.
		Since $\pi^{-1}$ is order preserving, we get that $x<_{lS} x'' <_{lS} x'$.
		Note that this implies $x''\geq_S s$ which is an absurd.
		
		Next, we show that $t^{\uparrow}\cap Y \subseteq \pi"(s^\uparrow \cap X)$.
		Suppose otherwise and fix some $y\in t^\uparrow \cap Y$ such that $\pi^{-1}(y)\notin s^{\uparrow}$, without loss of generality we assume that $x<_{lS} \pi^{-1}(y)$.
		Let $s'$ be the unique element in the level $S_\alpha$ that is either $\pi^{-1}(y)$ or $<_S$-below $\pi^{-1}(y)$.
		Since $s,s'\in S_\alpha$ and $s\neq s'$, we know that $\Delta_S(s, s')<\alpha$.
		Thus $s<_{lS} s'$.
		As $\pi$ is order-preserving, we get that $\pi(x) <_{lT} y$,
		
		By Lemma~\ref{Lemma - Aron incom} there exists a $s''\in N_\alpha\cap S$ such that $s, s',s''$ are pairwise incomparable and $s<_{lS} s''<_{lS} s'$.
		Now by elementarity of $N_\alpha$, pick $x''\in X\cap (s'')^{\uparrow }\cap N_\alpha$, thus $\h_T(\pi(x'')) <\alpha$.
		It follows that $s<_{lS} x'' <_{lS} s'$, which implies $x<_{lS} x'' <_{lS} \pi^{-1}(y)$.
		As $\pi$ is order-preserving, we get that $\pi(x)<_{lT} \pi(x'') <_{lT} y$.
		Note that this implies $\pi(x'')\geq_T t$, hence we get a contradiction that $\h_T(\pi(x''))\geq \alpha$.

	\end{proof}
	
	\begin{claim}
		$f$ is a tree isomorphism and order isomorphism.
	\end{claim}
	\begin{proof}
		Clearly $f$ is injective.
		To check that $f$ is onto, fix some $t\in T\restriction C$, where $\alpha=\h_T(t)$.
		Pick some $y\in Y\cap t^{\uparrow}$.
		Then $y\notin N_\alpha$ and hence $\pi^{-1}(y)\notin N_\alpha$, so $\h_S(\pi^{-1}(y))\geq \alpha$.
		Let $s$ be the unique element in the level $S_\alpha$ which is $\pi^{-1}(y)$ or $<_S$-below $\pi^{-1}(y)$. 
		Using similar ideas as in the previous claim, one can show that $s^{\uparrow}\cap X=\pi^{-1}"(t^\uparrow \cap Y)$.
		This is suffices as $\pi$ is a bijection, so $\pi"(s^{\uparrow}\cap X)=t^\uparrow \cap Y$.
		As every two distinct $t,t'\in T_\alpha$, we will have $t^\uparrow \cap Y \neq t'^\uparrow \cap Y$, this implies $f(s)=t$ as sought.

		Next we show that $f$ is a tree isomorphism, pick arbitrary $s,s'\in S\restriction C$ such that $s<_S s'$.
		Pick any $x\in s'^{\uparrow}\cap X\subseteq s^\uparrow \cap X$, then $\pi(x)\in (f(s))^\uparrow \cap Y$ and $\pi(x)\in (f(s'))^\uparrow \cap Y$.
		So $f(s)<_T\pi(x)$ and $f(s')<_T \pi(x)$.
		As $f$ is level-preserving and $(\pi(x))_\downarrow$ is well-ordered, $f(s)<_T f(s')$.
		
		Finally, we show that $f$ is lexicographical order preserving.
		As $f$ is a tree isomorphism, we only need to verify the case $s<_{lS} s'$ and $s$ is $<_S$-incomparable with $s'$ for some $s,s'\in S\restriction C$.
		Clearly this implies that the elements in $s^\uparrow \cap X $ are $<_{lS}$-below the elements in the set $s'^\uparrow\cap X$.
		As $\pi$ is order-preserving, we get that the the elements of $f(s)^\uparrow \cap Y$ are $<_{lT}$-below the elements in the set $f(s')^\uparrow\cap Y$.
		So $f(s)<_{lT} f(s')$ as sought.\qedhere
\end{proof}	\qedhere	
\end{proof}

\begin{thm}\label{Proposition - many linear orders}
	Suppose $\p(\kappa,2,{\sq},\kappa)$ holds.
Then there exists a family $\langle \mathbf L_\zeta\mid\zeta<2^\kappa\rangle$ of linear orders of size $\kappa$ each one is minimal with respect to being non-$\sigma$-scattered and every two linear orders from the family are not embedded into each other.
\end{thm}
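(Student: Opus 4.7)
The plan is to use Proposition~\ref{Proposition - the reduction} as a template: any antichain $A\s T$ of size $\kappa$ in the tree $T$ produced by Theorem~\ref{Proposition - the construction} yields a minimal non-$\sigma$-scattered linear order $(A,{<_{\lex}})$. To produce a pairwise non-embeddable family of size $2^\kappa$, I would construct, together with a suitable enhancement of $T$, a parameterized family $\langle A_\zeta : \zeta<2^\kappa\rangle$ of antichains of size $\kappa$ whose downward closures $\langle (A_\zeta)_\downarrow : \zeta<2^\kappa\rangle$ are pairwise non-isomorphic after restriction to any club.

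For the parameterization, I would exploit the $\varrho$-uniformity of $T$. Each $A_\zeta$ will take the form $\{\tau_\zeta^\alpha * \mathbf{b}^\alpha : \alpha\in S_\zeta\}$ for a suitable choice of $\varrho$-modifications $\tau_\zeta^\alpha$ and a cofinal set $S_\zeta$ (both depending on $\zeta$), where $\mathbf{b}^\alpha$ is the distinguished node built in the proof of Theorem~\ref{Proposition - the construction}. Since $T$ is $\varrho$-uniform, $A_\zeta\s T$; and the antichain property together with $|A_\zeta|=\kappa$ follows by adapting the argument of Claim~\ref{Claim - not kappa souslin}, using that $R:=\{\alpha\in\acc(\kappa):\otp(C_\alpha)=\omega\}$ is cofinal in $\kappa$ so that $|S_\zeta|=\kappa$ can be arranged. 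There are $2^\kappa$ parameters of this form.

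To prove pairwise non-embeddability, I would reduce to a tree-theoretic statement via Lemma~\ref{Lemma - club iso}: any order embedding $(A_\zeta,{<_{\lex}})\to (A_\eta,{<_{\lex}})$ is an order isomorphism onto $\pi[A_\zeta]\s A_\eta$, hence yields a club $C\s\kappa$ on which $(A_\zeta)_\downarrow\restriction C$ is tree-isomorphic to $\pi[A_\zeta]_\downarrow\restriction C\s (A_\eta)_\downarrow\restriction C$. To defeat this for all $\zeta\neq\eta$, I would enhance the construction of $T$ with an additional action, in the spirit of \cite{paper62}, using a dedicated residue class of the $\diamondsuit(H_\kappa)$ partition $\langle R_i\mid i<\kappa\rangle$ to anticipate, at stationarily many $\alpha$, a candidate partial tree isomorphism encoded via $\Omega_\beta$, and to steer the recursive choice of $\mathbf{b}^\alpha$ so as to sabotage any such candidate.

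The chief obstacle is scheduling: three separate diagonalizations must proceed simultaneously during the construction of $T$ --- the non-Souslin action from Claim~\ref{Claim - not kappa souslin}, the frozen-cone action, and the new pairwise non-embeddability action for the $A_\zeta$'s. I would partition $\langle R_i : i<\kappa\rangle$ into three cofinal stationary pieces, assigning each diagonalization its own channel. Clause~(5) of Lemma~\ref{Lemma - The wanted sequence}, which guarantees that $\sup(\nacc(C_\alpha)\cap B_i)=\alpha$ for every $i<\alpha$ on a stationary set of $\alpha$'s and for every sequence $\langle B_i\rangle$ of cofinal subsets of $\acc(\kappa)$, ensures that the three actions can coexist without interference. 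The resulting family $\langle \mathbf L_\zeta:\zeta<2^\kappa\rangle$, with $\mathbf L_\zeta:=(A_\zeta,{<_{\lex}})$, then witnesses the theorem by Proposition~\ref{Proposition - the reduction}.
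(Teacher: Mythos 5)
There is a fatal flaw in your overall strategy: you propose to realize all $2^\kappa$ orders as antichains $A_\zeta$ inside a \emph{single} tree $T$ produced by Theorem~\ref{Proposition - the construction}, but any two $\kappa$-sized antichains of such a tree yield mutually embeddable lexicographic orders. Indeed, the very property you need for minimality --- that every subtree of $T$ contains a frozen cone $T_{[s,i]}$ --- is exactly what produces the embeddings: given antichains $A_\zeta,A_\eta$ of size $\kappa$, the downward closure $(A_\eta)_\downarrow$ is a subtree, hence contains some $T_{[s,i]}$; Claim~\ref{Claim - operation} then gives a map $\phi:T\to T_{[s,i]}\s (A_\eta)_\downarrow$ preserving $<_{\lex}$ and incompatibility, and sending each $x\in A_\zeta$ to any element of $A_\eta$ above $\phi(x)$ embeds $(A_\zeta,{<_{\lex}})$ into $(A_\eta,{<_{\lex}})$. (Note that the proof of the last claim of Proposition~\ref{Proposition - the reduction} never uses that $Y\s X$ beyond $Y$ being a $\kappa$-sized antichain.) So no ``enhancement'' of the construction of a single tree, nor any clever choice of the modifications $\tau_\zeta^\alpha$ and supports $S_\zeta$, can make the family pairwise non-embeddable: the obstruction is structural, and your plan to ``sabotage'' candidate tree isomorphisms between subtrees of one and the same $T$ cannot succeed, since the identity already witnesses such (partial) embeddings and, more to the point, the lexicographic embeddings above genuinely exist.

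The actual proof constructs $2^\kappa$ \emph{distinct} trees $\langle T^\zeta\mid\zeta\in{}^\kappa2\rangle$ simultaneously, via a system of levels $\langle L^\zeta\mid\zeta\in{}^{<\kappa}2\rangle$ and a matrix $\langle b^{\alpha,\zeta}\rangle$ of distinguished nodes at each limit level, so that trees indexed by different branches of ${}^{<\kappa}2$ diverge. A dedicated action (Case~(1a)) uses $\diamondsuit(H_\kappa)$ to anticipate a level-preserving, order-preserving map $f$ from a downward-closed subset of $T^{\zeta_0}$ restricted to a club into $T^{\zeta_1}$, and steers $b^{\alpha,\zeta_1}$ into a high frozen cone so that $f$ is contradicted at level $\alpha$. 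Your reduction via Lemma~\ref{Lemma - club iso} (an order embedding of the lex orders gives a club-level tree embedding of downward closures) is exactly right and is the part of your outline that survives --- but it only has teeth when the source and target antichains live in two different trees between which all club-level tree embeddings have been killed. Your scheduling concern is legitimate but is handled in the paper by case analysis on the shape of the guessed object $\Omega_\beta$ and of $\varphi(\beta)$ rather than by splitting $\langle R_i\mid i<\kappa\rangle$ into three channels.
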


\begin{proof}
Our plan is to construct a family $\langle T^\zeta\mid \zeta\in {}^{\kappa}2\rangle$,
where for every $\zeta\in {}^{\kappa}2$, $T^\zeta \s{}^{<\kappa}\omega$ is a uniformly $\varrho$-coherent streamlined $\kappa$-Aronszajn not $\kappa$-Souslin tree,
and every subtree of $T^\zeta$
contains some frozen cone $T_{[s,i]}$ where $s\in T\restriction \nacc(\kappa)$ and $i<\omega$. 
Our construction of such a large family of trees follow the proof of \cite[Theorem~4.4]{paper62}.

Next, we will consider an antichain $X^{\zeta}\subseteq T^{\zeta}$ of size $\kappa$, so by Proposition~\ref{Proposition - the reduction} the linear order of size $\kappa$, $\mathbf L_\zeta:=(X^\zeta,{<_{\lex}})$ will be minimal with respect to being non-$\sigma$-scattered.

Finally, by using $\diamondsuit(H_\kappa)$ assumption in the construction of the family of trees and Lemma~\ref{Lemma - club iso}, we will establish that the family of linear orders $\langle X^{\zeta}\mid \zeta\in {}^{\kappa}2\rangle$ is indeed such that any two of them are not embedded to one another. 

Let $\langle C_\alpha\mid \alpha<\kappa\rangle$ be a sequence as in Lemma~\ref{Lemma - The wanted sequence},
without loss of generality, we may assume that $0\in C_\alpha$ for all nonzero $\alpha<\kappa$.

We shall construct a sequence $\langle L^\zeta\mid \zeta\in {}^{<\kappa}2\rangle$ such that, for all $\alpha<\kappa$ and $\zeta\in {}^{\alpha}2$:
\begin{enumerate}
	\item $L^\zeta\subseteq {}^{\alpha}\omega$;
	\item for every $\beta<\alpha$, $L^{\zeta\restriction \beta}=\{t\restriction \beta\mid t\in L^\zeta\}$.
\end{enumerate}
By convention, for every $\alpha\in \acc(\kappa+1)$ such that $\langle L^\zeta\mid \zeta\in {}^{<\alpha}2\rangle$ has already been defined, and for every $\zeta\in {}^{\alpha}2$, we shall let $T^\zeta:=\bigcup_{\beta<\alpha}L^{\zeta\restriction \beta}$, so that $T^\zeta$ is a tree of height $\alpha$ whose $\beta^{\text{th}}$ level is $L^{\zeta\restriction \beta}$ for all $\beta<\alpha$.

The construction of the sequence $\langle L^\zeta\mid \zeta\in {}^{<\kappa}2 \rangle$ is by recursion on $\dom(\zeta)$.
We start by letting $L^{\emptyset}:=\{\emptyset\}$.
For every $\alpha<\kappa$ such that $\langle L^\nu\mid \nu\in {}^{\alpha+1}2\rangle$ has already been defined, for every $\zeta\in {}^{\alpha+2}2$, let 
$L^\zeta:=\{ t{}^\smallfrown\langle n\rangle\mid t\in L^{\zeta\restriction\alpha+1}, n\in \omega\}$.

For $\alpha\in \acc(\kappa)$ such that $\langle L^\nu\mid \nu\in {}^{\alpha}2\rangle$ has already been defined, for every $\zeta\in {}^{\alpha+1}2$, and $t\in L^{\zeta\restriction\alpha}$, let $I_{t}:=\{n<\omega\mid \sup(t^{-1}(n))=\alpha\}$.
For every $\zeta\in {}^{\alpha+1}2$, let $L^{\zeta}:=\{ t{}^\smallfrown\langle \min(I_t)\rangle \mid t\in L^{\zeta\restriction \alpha}, \text{ and	 }I_t\neq \emptyset\}$.

Suppose now that $\alpha\in \acc(\kappa)$ is such that $\langle L^\zeta\mid \zeta\in {}^{<\alpha}2\rangle$ has already been defined.
We shall define a matrix
$$ \mathbb B^\alpha =\langle b^{\alpha,\zeta}\mid \beta\in C_\alpha,~\zeta\in {}^{\beta}2\rangle$$
ensuring that $b^{\alpha,\bar\zeta}\subseteq b^{\alpha,\zeta}\in L^\zeta$ whenever $\bar \zeta\subseteq \zeta$.
Then, for all $\zeta\in {}^{\alpha}2$, we will define $\mathbf b^\zeta :=\bigcup_{\beta\in C_\alpha}b^{\alpha,\zeta\restriction \beta}$ and we shall let 
$$L^\zeta:=\{ \eta*\mathbf b^\zeta\mid \eta\in\varrho, \dom(\eta)\s\alpha\}\setminus\{\emptyset\}.$$
	
We assure the following assumptions during the construction for every $\alpha\in \acc(\kappa)$ and $\zeta\in {}^{\alpha}2$:
\begin{enumerate}
	\item $C_\alpha =(\mathbf b^{\zeta})^{-1}(\{0\})$;
	\item if $\beta\in \acc(C_\alpha)$, then $\mathbf b^{\zeta\restriction \beta}\subseteq \mathbf b^\zeta$;
\end{enumerate}

Let $b^{\alpha,\emptyset}:=\emptyset$.
We now turn to define the components of the matrix $\mathbb B^\alpha$ by recursion on $\beta\in C_\alpha$.
So suppose that $\beta\in C_\alpha$ is such that
$$ \mathbb B^\alpha_{<\beta}:=\langle b^{\alpha,\zeta}\mid \bar \beta\in C_\alpha\cap \beta,~\zeta\in {}^{\bar \beta}2\rangle$$
has already been defined.

For all $\zeta\in {}^{\beta}2$ where $0<\beta$, there are  two main cases to consider:

	\underline{Case $(1)$:} Suppose that $\beta\in \nacc(C_\alpha)$ and denote $\beta^-:=\sup(C_\alpha\cap \beta)$.
	Next we split to three sub-cases:
	
\underline{Case $(1a)$:}  If $\beta\in \acc(\kappa)$ and the following holds:
\begin{enumerate}
	\item[$(i)$] There exists two distinct $\zeta_0,\zeta_1\in {}^{\beta}2$, $P\subseteq {}^{<\beta}\omega$ and a function $f:P\restriction E\rightarrow T^{\zeta_1}\restriction E$ where $P$ is a downward closed subset of $T^{\zeta_0}$, $E$ is a cofinal subset of $\beta$, $\acc(E)\cap \beta\subseteq E$ and $f$ is an injective order-preserving level-preserving map such that 
	$\Omega_\beta=\{(\langle \zeta_j\restriction \epsilon \mid j<2\rangle,f\restriction (P\restriction (E\cap \epsilon)) ) \mid \epsilon<\beta \}$.
	\item[$(ii)$] For some $i<2$, we have $\zeta=\zeta_i$.
	\item[$(iii)$] $\varphi(\beta)=(\eta_0,\eta_1)$ where $\eta_0,\eta_1\in \varrho$.
	\item[$(iv)$] for every $\gamma\in E$, the function $\eta_0*b^{\alpha,\zeta_0}\restriction \gamma$ is in $\dom(f)$.
\end{enumerate}
 
Define $b^{\alpha,\zeta_0}$ to be the $\lhd$-least element of $L^{\zeta_0}\cap T^{\zeta_0}_{[b^{\alpha,\zeta_0\restriction \beta^-}{}^\smallfrown\langle 0\rangle,1]}$.

Let $\gamma:= \min(E\setminus (\beta^-+1))$ and $\gamma^+:=\min(E\setminus (\gamma+1))$, i.e. $\gamma<\gamma^+$ are minimal two successive elements in $E$ above $\beta^-$.
Note that $(\eta_0*b^{\alpha,\zeta_0})\restriction\gamma^+\in \dom(f)$. 
Let $$n:=f((\eta_0*b^{\alpha,\zeta_0})\restriction\gamma^+)(\gamma)+\max\{|k|\mid k\in \rng(\eta_1)\}$$ and define $b^{\alpha,\zeta_1}$ to be the $\lhd$-least element of $L^{\zeta_1}\cap T^{\zeta_1}_{[b^{\alpha,\zeta_1\restriction \beta^-}{}^\smallfrown\langle 0\rangle,n+1]}$.
Notice that this set is non-empty as the tree $T^{\zeta_1}$ is $\varrho$-uniform.
Note this definition imply that $(\eta_1*b^{\alpha,\zeta_1})(\gamma)\neq f((\eta_0*b^{\alpha,\zeta_0})\restriction\gamma^+)(\gamma)$.

\underline{Case $(1b)$:} If $\beta\in \acc(\kappa)$ and the following holds:
\begin{enumerate}\item[$(i)$] There exists a subtree $A$ of $T^{\zeta}$ such that $\Omega_\beta=\{ (\zeta\restriction \epsilon, A\restriction \epsilon ) \mid  \epsilon<\beta \}$;
	\item[$(ii)$] $\varphi(\beta)$ is some $\eta\in \varrho$.
\end{enumerate}
Let
$$Q^{\alpha, \beta} := \{ t\in L^{\zeta}\cap T^{\zeta}_{[b^{\alpha,\zeta\restriction \beta^-}{}^\smallfrown\langle 0\rangle ,1]}\mid \exists s\in (T^{\zeta})\setminus A[  \varphi(\beta)*(b^{\alpha,\zeta\restriction \beta^-}{}^\smallfrown\langle 0\rangle)\s s~\wedge \varphi(\beta)^-*s\s t]\}.$$

Now, consider the two possibilities:
\begin{itemize}
	\item[$(1)$] If $Q^{\alpha,\beta} \neq \emptyset$, then let $t$ denote its $\lhd$-least element, and put $b^{\alpha,\zeta}:=t$;
	\item[$(2)$] Otherwise, let $b^{\alpha,\zeta}$ be the $\lhd$-least element of $L^{\zeta}\cap T^{\zeta}_{[b^{\alpha,\zeta\restriction \beta^-}{}^\smallfrown\langle 0\rangle,1]}$.
\end{itemize}

\underline{Case $(1c)$:} Otherwise, let $b^{\alpha,\zeta}$ be the $\lhd$-least element of $L^{\zeta}\cap T^{\zeta}_{[b^{\alpha,\zeta\restriction \beta^-}{}^\smallfrown\langle 0\rangle,1]}$.

\underline{Case $(2)$:} Suppose that $\beta\in \acc(C_\alpha)$.
Then we define $b^{\alpha,\zeta}:=\bigcup\{ b^{\alpha,\zeta\restriction \bar \beta}\mid \bar \beta\in C_\alpha\cap \beta \}$. We must show that the latter belongs to $L^\zeta$.
It suffices to prove that $b^{\alpha,\zeta}=\mathbf b^\eta$. Since $\vec C$ is coherent and $\beta\in \acc(C_\alpha)$ it is the case that $C_\alpha\cap \beta = C_\beta$,  and hence proving $b^{\alpha,\zeta}=\mathbf b ^\zeta$ amounts to showing that $b^{\alpha,\zeta\restriction \delta}=b^{\beta,\zeta\restriction \delta}$ for all $\delta\in C_\beta$. This is taken care of by the following claim.

\begin{claim}
	$\mathbb B^\alpha_{<\beta}=\mathbb B^\beta$. That is, the following matrices coincide:
	\begin{itemize}
		\item $\langle b^{\alpha,\zeta}\mid \bar\beta\in C_\alpha\cap\beta, \zeta\in {}^{\bar\beta}2 \rangle$;
		\item $\langle b^{\beta,\zeta}\mid \bar\beta\in C_\beta, \zeta\in {}^{\bar\beta}2\rangle$.
	\end{itemize}		
\end{claim}
\begin{proof} We already pointed out that $C_\alpha\cap\beta=C_{\beta}$, which for the scope of this proof we denote by $D$.
	Now, by induction on $\delta\in D$, we prove that 
	$$\langle b^{\alpha,\zeta}\mid \zeta\in {}^{\delta}2 \rangle=\langle b^{\beta,\zeta}\mid \zeta\in {}^{\delta}2 \rangle.$$
	
	The base case $\delta=\min(D)=0$ is immediate since $b^{\alpha,\emptyset}=\emptyset=b^{\beta,\emptyset}$.
	The limit case $\delta\in\acc(D)$ follows from the continuity of the matrices i.e. for $\gamma\in \acc(\kappa)$, $\delta\in\acc(C_\gamma)$ and $\zeta\in {}^{\delta}2$ it is the case that $b^{\gamma,\zeta}=\bigcup\{b^{\gamma,\zeta\restriction\bar\delta}\mid \bar\delta\in C_\gamma\cap\delta\}$.
	
	Finally, assuming that $\delta^-<\delta$ are two successive elements of $D$ such that 
	$$\langle b^{\alpha,\zeta}\mid \zeta\in {}^{{\delta^-}}2\rangle=\langle b^{\beta,\zeta}\mid \zeta\in {}^{{\delta^-}}2\rangle,$$
	we argue as follows.
	 Given $\zeta\in{}^\delta2$ by the above construction,
	for every $\gamma\in\{\alpha,\beta\}$, the value of 
	$b^{\gamma,\zeta}$ is completely determined by $\delta$, $\langle L^\zeta\mid\zeta\in {}^{\le\delta}2\rangle$, $\Omega_\delta$, $D$,
	$\psi(\delta)$, $\zeta$, $x$, and
	$\langle b^{\gamma,\xi}\mid \xi\in {}^{\delta^-}2\rangle$
	in such a way that our inductive assumptions imply that $b^{\alpha,\zeta}=b^{\beta,\zeta}$.
\end{proof}
At the end of the above process, for every $\zeta\in {}^{\kappa}2$, we have obtained a streamlined tree $T^\zeta:=\bigcup_{\alpha<\kappa}L^{\zeta\restriction \alpha}$ whose $\alpha^\text{th}$-level is $L^{\zeta\restriction \alpha}$.

As the construction is similar to previous one, we leave the details of the proof of the following Claim to the reader.

\begin{claim}
	For every $\zeta\in {}^{\kappa}2$, $T^\zeta$ is a slim uniformly $\varrho$-coherent
	 $\kappa$-Aronszajn streamlined subtree of ${}^{<\kappa}\omega$ that is not $\kappa$-Souslin tree and every subtree of $T$ contains some frozen cone $T_{[s,i]}$ where $s\in T\restriction \nacc(\kappa)$ and $i<\omega$.\qed
\end{claim}

For every $\zeta\in {}^\kappa2$ define $X^\zeta:=\{\mathbf  b^{\zeta\restriction \alpha}\mid \alpha<\kappa,~\otp(C_\alpha)=\omega\}$.
Recall that by Lemma~\ref{Lemma - The wanted sequence} $(3)$, the set $\{\alpha<\kappa\mid \otp(C_\alpha)=\omega\}$ is cofinal in $\kappa$.
By similar arguments as in Claim~\ref{Claim - not kappa souslin}, one can check that $X^\zeta$ is indeed an antichain of the tree $T^\zeta$ of size $\kappa$.
So by Proposition \ref{Proposition - the reduction}, 
the linear order $(X^\zeta,{<_{\lex}})$ is minimal with respect to being non-$\sigma$-scattered.
Using Lemma~\ref{Lemma - club iso} the next claim finish the proof.

\begin{claim}
	Suppose $\zeta_0,\zeta_1\in {}^{\kappa}2$ are distinct and $X$ is a subset of $T^{\zeta_0}$ of size $\kappa$, then there exists no club $E\subseteq \kappa$ and a map $f:X_\downarrow\restriction E\rightarrow T^{\zeta_1}\restriction E$ which is an order-preserving and level preserving injective map.
\end{claim}
\begin{proof}
	Assume on the contrary that for two distinct $\zeta_0,\zeta_1\in {}^{\kappa}2$ and $X\subseteq T^{\zeta_0}$ subset of size $\kappa$, there exists a club $E\subseteq \kappa$ and a map $f:(X)_\downarrow\restriction E\rightarrow T^{\zeta_1}\restriction E$ which is an order-preserving, level preserving injective map.
	We aim to get a contradiction.
	Set $\Omega:=\{ (\langle \zeta_j\restriction \epsilon \mid j<2\rangle,f\restriction (T^{\zeta_0}\restriction (E\cap \epsilon))\mid \epsilon<\kappa \}$, note that $\Omega\subseteq H_{\kappa}$.
	
	By a proof similar to that of \cite[Claim~2.3.2]{paper22}, for every $i<\kappa$, the set $$A_i := \{ \beta \in R_i \mid  \{ (\langle \zeta_j\restriction \epsilon \mid j<2\rangle,f\restriction (T^{\zeta_0}\restriction (E\cap \epsilon))\mid \epsilon<\beta \} = \Omega_\beta\text{ and }\sup(E\cap \beta)=\beta\}$$
		is stationary.
		Hence, the set $B_i:=A_i\cap \acc(\kappa)$ is a cofinal subset of $\acc(\kappa)$ for every $i<\kappa$.

	For every $\alpha<\kappa$, we let $m_\alpha$ be the set of all of $\varrho$-modifications with domain subset of $\alpha$.
	Thus, we apply clause~$(4)$ of Lemma \ref{Lemma - The wanted sequence} to the sequence $\langle B_i\mid i<\kappa\rangle$, and the club $D:=\{ \alpha<\kappa\mid m_\alpha\times m_\alpha\s\psi[\alpha]\}$
	to obtain
	an ordinal $\alpha\in D$ such that for all $i<\alpha$:
	\begin{equation}\label{eq1}
		\sup (\nacc(C_\alpha) \cap B_i) = \alpha.
	\end{equation}
	
	Note that $\alpha\in A$, to see that $f$ is not an order-preserving embedding, consider any $x \in (X)_\downarrow \cap T^{\zeta_0}_\alpha$.
	By the construction, $x$ and $f(x)$ are $\varrho$-modification of $  b^{\alpha,\zeta_0}$ and $b^{\alpha,\zeta_1}$ respectively, i.e. for some $\varrho$-modification $\eta_0$ and $\eta_1$ both with domain subset of $\alpha$ we have, $x=\eta_0* b^{\alpha,\zeta_0}$ and $f(x)=\eta_1* b^{\alpha,\zeta_1}$.
	As $\alpha\in D$ and $\eta_0,\eta_1 \in m_\alpha$, we can fix $i<\alpha$ such that $\psi(i)=(\eta_0,\eta_1)$.
	
	Fix $\beta \in \nacc(C_\alpha) \cap B_i$.
	Clearly, $\varphi(\beta)=\psi(\pi(\beta))=\psi(i)=(\eta_0,\eta_1)$.
	Since $\beta \in B_i$, $\sup(E\cap \beta)=\beta$ and $\{ (\langle \zeta_j\restriction \epsilon \mid j<2\rangle,f\restriction (T^{\zeta_0}\restriction (E\cap \epsilon))\mid \epsilon<\beta \} =\Omega_\beta$.
	
	Note that $b^{\alpha,\zeta_1}$ and $b^{\alpha,\zeta_0}$ are defined by Case $(1a)$ in the construction.

Let $\gamma<\gamma^+$ be the minimal two successive elements in $E$ above $\beta^-$.
Let $$n:=f(\eta_0*(b^{\alpha,\zeta_0}\restriction\gamma^+))(\gamma)+\max\{|k|\mid k\in \rng(\eta_1)\}$$
	By the way we constructed $b^{\alpha,\zeta_1}$ in Case $1a$, we know that $b^{\alpha,\zeta_1}(\gamma)>n$.
We get that $f(x)(\gamma)=(\eta_1*b^{\alpha,\zeta_1})(\gamma)>f(\eta_0*(b^{\alpha,\zeta_0}\restriction\gamma^+))(\gamma)$.
As $\eta_0*(b^{\alpha,\zeta_0}\restriction\gamma^+)\subseteq x$ and both are from $\dom(f)$, we get that as $f$ is order-preserving that $f(\eta_0*(b^{\alpha,\zeta_0}\restriction\gamma^+))\subseteq f(x)$.
Hence, $f(\eta_0*(b^{\alpha,\zeta_0}\restriction\gamma^+))(\gamma)=f(x)(\gamma)$ which give us the absurd statement that 
$f(x)(\gamma)<f(x)(\gamma)$.\qedhere
\end{proof}\qedhere
\end{proof}

For an infinite cardinal $\lambda$, recall that a $\lambda^+$-tree is \emph{special} if there is a partition of the tree to $\lambda$-many antichains.
Also recall that a linear order $(C,{<_C})$ is a \emph{$\lambda^+$-Countryman line} if $C$ is of cardinality $\lambda^+$ and $(C\times C,\leq^2_C)$ is a union of $\lambda$-many chains.
A summary of basic facts about $\lambda^+$-Countryman lines can be found in Section~$4$ of \cite{CEM23}.

\begin{cor} Suppose $\p_\xi(\kappa,2,{\sq},\kappa)$ holds with $\xi<\kappa$. 
Then there exists a family $\langle \mathbf L_\zeta\mid\zeta<2^\kappa\rangle$ of $\kappa$-Countryman lines each one is minimal with respect to being non-$\sigma$-scattered and every two members of the family are not embedded into each other.
\end{cor}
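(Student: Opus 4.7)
The plan is to run the construction of Theorem~\ref{Proposition - many linear orders} verbatim, and then use the extra hypothesis $\xi<\kappa$ to upgrade each of the produced lines $\mathbf L_\zeta=(X^\zeta,{<_{\lex}})$ from being merely minimal non-$\sigma$-scattered to being $\kappa$-Countryman. Since $\p_\xi(\kappa,2,{\sq},\kappa)$ implies $\p(\kappa,2,{\sq},\kappa)$, Theorem~\ref{Proposition - many linear orders} supplies the family $\langle T^\zeta\mid \zeta\in{}^\kappa 2\rangle$ of uniformly $\varrho$-coherent streamlined $\kappa$-Aronszajn trees together with antichains $X^\zeta\subseteq T^\zeta$ of size $\kappa$ and lines $\mathbf L_\zeta$ that are minimal non-$\sigma$-scattered and pairwise non-embeddable; what remains is to upgrade each $\mathbf L_\zeta$ to a $\kappa$-Countryman line.

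To this end, I would invoke the lemma from the subsection ``Assuring $T\restriction\nacc(\kappa)$ is special''. The $C$-sequence $\vec C$ supplied by Lemma~\ref{Lemma - The wanted sequence} now has $\otp(C_\alpha)\le \xi<\kappa$ for every $\alpha$, and I would verify the specialness lemma's hypothesis by defining a regressive map $g$ on a suitable club $C\subseteq\kappa$ (encoding $\otp(C_\alpha)$ together with $\sup(\nacc(C_\alpha))$, both below $\alpha$ on a club) and, for each $\nu<\kappa$, an auxiliary map $f_\nu$ partitioning $g^{-1}(\nu)$ further by the order-isomorphism type of the initial segment of $C_\alpha$. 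Clause~(2) of Lemma~\ref{Lemma - The wanted sequence} ($\sq$-coherence) then forces pairwise $\sq$-incomparability within each fiber, since two distinct $C_\alpha,C_\beta$ sharing both their top-$\nacc$ point and their isomorphism type cannot be $\sq$-comparable. The conclusion is that $T^\zeta\restriction\nacc(\kappa)$ is special for every $\zeta$, i.e., the union of fewer than $\kappa$ tree-antichains.

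From Claim~\ref{Claim - not kappa souslin}, $X^\zeta=\{\mathbf b^{\zeta\restriction\alpha}\mid \otp(C_\alpha)=\omega\}$ lies inside $T^\zeta\restriction \nacc(\kappa)$, so the specialness of the ambient subtree restricts to $X^\zeta$. It is a classical fact (Todor\v cevi\'c's walks analysis; see \cite{Walks_on_ordinals_book} and the discussion before Theorem~1.6 of \cite{Moore_may_be_the_only_minimal_uncounable_lin_orders}) that the lexicographic ordering of a size-$\kappa$ antichain in a uniformly coherent special $\kappa$-Aronszajn tree is a $\kappa$-Countryman line: one colors each pair $(x,y)\in X^\zeta\times X^\zeta$ with $x<_{\lex} y$ by a combination of the antichain block containing the meet of $x$ and $y$ and the finite $\varrho$-data recording their split, yielding $<\kappa$ color classes each of which is a chain in $(X^\zeta\times X^\zeta,\le_{\lex}^2)$. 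Combined with the first paragraph this produces the claimed family of $2^\kappa$-many pairwise non-embeddable minimal $\kappa$-Countryman lines.

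The main obstacle I anticipate is the fiber analysis in the middle step, namely showing that the chosen $g$ and $f_\nu$ genuinely yield $\sq$-antichain fibers. The combinatorics split naturally according to whether $\alpha\in\nacc(\acc(\kappa))$ (where $\sup(\nacc(C_\alpha))$ is regressive and captures the coherence obstruction directly) or $\alpha\in\acc(\acc(\kappa))$ (where one must instead code the initial segment of $C_\alpha$ regressively, exploiting $\xi<\kappa$ to ensure only $<\kappa$ many isomorphism types arise). This is the proxy-framework analogue of the classical fact that $\square_\xi$-sequences produce special $\xi^+$-Aronszajn trees, and is the reason the Countryman conclusion breaks when $\xi=\kappa$.
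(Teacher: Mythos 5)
Your proposal follows essentially the same route as the paper: the paper likewise observes that when $\xi<\kappa$ the trees of Theorem~\ref{Proposition - many linear orders} satisfy that $T^\zeta\restriction\nacc(\kappa)$ is special (it derives this from the ideas of Claim~\ref{Claim - not kappa souslin}, i.e.\ from $C_\alpha=(\mathbf b^{\zeta\restriction\alpha})^{-1}(0)$ together with $\otp(C_\alpha)\le\xi$), and then cites \cite{MR908147} and \cite[Proposition~4.3]{CEM23} for the passage from coherent-plus-special to Countryman. Two local slips in your writeup are worth correcting. First, $\sup(\nacc(C_\alpha))=\alpha$ for every limit $\alpha$ (since $\nacc(C_\alpha)$ is cofinal in $\alpha$ whenever $C_\alpha$ is infinite), so that coordinate of your proposed $g$ is not regressive; it is also unnecessary, because $g(\alpha):=\otp(C_\alpha)\le\xi$ is already regressive on the club $\kappa\setminus(\xi+1)$ and a proper $\sqsubseteq$-extension strictly increases order type, so each fiber of $g$ is by itself a $\sqsubseteq$-antichain and one may take $\mu_\nu=1$ --- no further splitting by isomorphism type or by case on $\nacc(\acc(\kappa))$ versus $\acc(\acc(\kappa))$ is needed. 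Second, $X^\zeta$ consists of the nodes $\mathbf b^{\zeta\restriction\alpha}$, which have limit height $\alpha$, so $X^\zeta\subseteq T^\zeta\restriction\acc(\kappa)$ rather than $T^\zeta\restriction\nacc(\kappa)$; this is harmless, since the Countryman argument uses the specialness of $T^\zeta\restriction\nacc(\kappa)$ globally (pushing the meet of a pair up to the next non-accumulation level before reading off the colour), not membership of $X^\zeta$ in that restriction.
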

\begin{proof} 
	Under the assumptions, using the ideas of the proof in Claim~\ref{Claim - not kappa souslin}, the $\kappa$-trees constructed in Theorem~\ref{Proposition - many linear orders} are such that $T^\zeta\restriction \nacc(\kappa)$ is a special $\kappa$-tree whenever $\zeta\in {}^\kappa2$.
	So by the arguments presented in \cite{MR908147} and \cite[Proposition 4.3]{CEM23}, the linear orders are $\kappa$-Countryman lines as sought.
\end{proof}

It should be clear that all the constructions in this paper that are based on $\p_\xi(\kappa,2,{\sq},\kappa)$ can be carried out assuming the weaker principle $\p^\bullet_\xi(\kappa,2,{\sq},\kappa)$ of \cite{paper23}.
In particular, by \cite[Theorem~6.1(11)]{paper23}, after adding a single Cohen real to a model of $\ch$,
there exists a family of $2^{\aleph_1}$ many Countryman lines each one is minimal with respect to being non-$\sigma$-scattered and every two members of the family are not embedded into each other.

\section*{Acknowledgments}
This paper presents a result from the author’s PhD research at Bar-Ilan University under the
supervision of Assaf Rinot to whom he wishes to express his deep appreciation.
The author is supported by the European Research Council (grant agreement ERC-2018-StG 802756).
The main result of this paper was presented at the \emph{Set-Theoretic Topology} workshop in Oaxaca, August 2023. We thank the organizers for the invitation and the warm hospitality.
We thank Tanmay Inamdar and Shira Yadai for illuminating discussion on the subject.

\end{document}